\pgfplotsset{compat=1.13,
  axis lines=left,
legend style={draw=none},
scaled ticks=false
}
\definecolor{maincolor}{gray}{0}
\newcommand{\fullnorm}[2][]{\left\vert\kern-0.25ex\left\vert\kern-0.25ex\left\vert #2 
      \right\vert\kern-0.25ex\right\vert\kern-0.25ex\right\vert_{\mathrm{DG} #1}}
\newcommand{\fullnormfixedsize}[2][]{\vert\kern-0.25ex\vert\kern-0.25ex\vert #2 
      \vert\kern-0.25ex\vert\kern-0.25ex\vert_{\mathrm{DG} #1}}
\newtheorem{theorem}{Theorem}
\newtheorem{lemma}{Lemma}
\newtheorem{remark}{Remark}
\newtheorem*{ind-assumption}{Induction Assumption}
\newtheorem{proposition}[lemma]{Proposition}
\newtheorem{corollary}[lemma]{Corollary}
\let\epsilon\varepsilon
\let\phi\varphi
\let\theta\vartheta
\newcommand{\spacenameinnorm}{\mathcal{D}}
\newcommand{\spacenameinnormhom}{\mathcal{G}}
\newcommand{\wnormdg}[5]{\left\vert\kern-0.25ex\left\vert\kern-0.25ex\left\vert #1 
      \right\vert\kern-0.25ex\right\vert\kern-0.25ex\right\vert_{\spacenameinnorm^{#3}_{#4}(#5)} }
\newcommand{\wnormdghom}[5]{\left\vert\kern-0.25ex\left\vert\kern-0.25ex\left\vert #1 
      \right\vert\kern-0.25ex\right\vert\kern-0.25ex\right\vert_{\spacenameinnormhom^{#3}_{#4}(#5)} }
\newcommand{\phat}{{p_\star}}
\newcommand{\ghat}{\gamma_\star}
\newcommand{\thetahat}{\theta_\star}
\newcommand{\Cinterp}{C_\mathrm{interp}}
\newcommand{\rhotilde}{\tilde{\rho}}
\newcommand{\ratio}{\mathfrak{f}}
\newcommand{\rationum}{\cm{\frac{1 + \sqrt{5}}{2}}}
\newcommand{\CSp}{C_{S, p}}
\newcommand{\CCp}{C_{2, p}}
\newcommand{\CCCp}{C_{3, p}}
\newcommand{\CCCpstar}{C_{3, \phat}}
\newcommand{\CSq}{C_{S, \frac{3p}{2}}}
\newcommand{\Cregp}{C_{\mathrm{reg},p}}
\newcommand{\Cregpstar}{C_{\mathrm{reg},\phat}}
\newcommand{\Cregq}{C_{\mathrm{reg}, \frac{3p}{2}}}
\newcommand{\Cregqstar}{C_{\mathrm{reg}, \frac{3\phat}{2}}}
\newcommand{\Cphi}{C_{\Phi}}
\newcommand{\Aphi}{A_{\Phi}}
\newcommand{\cK}{\mathcal{K}}
\newcommand{\cJ}{\mathcal{J}}
\newcommand{\fc}{{\mathfrak{c}}}
\newcommand{\fC}{{\mathfrak{C}}}
\newcommand{\dalpha}{\partial^\alpha}
\newcommand{\dbeta}{\partial^\beta}
\newcommand{\dab}{\partial^{\alpha+\beta}}
\newcommand{\betam}{{|\beta|}}
\newcommand{\alpham}{{|\alpha|}}
\newcommand{\xim}{{|\xi|}}
\newcommand{\hB}{\widehat{B}}
\newcommand{\hr}{\hat{r}}
\newcommand{\Ctil}{{\widetilde{C}}}
\newcommand{\Atil}{{\widetilde{A}}}
\newcommand{\tgamma}{{\tilde{\gamma}}}
\newcommand{\tB}{{\widetilde{B}}}
\newcommand{\cm}[1]{{#1}}
\author{Yvon Maday$^{\dagger, *}$}
\address[$\dagger$]{Sorbonne Université, CNRS, Université de Paris, Laboratoire
Jacques-Louis Lions (LJLL), F-75005 Paris, France. }
\email{yvon.maday@ann.jussieu.fr}
\author{Carlo Marcati$^\diamond$}
\address[$\diamond$]{Seminar for Applied Mathematics, ETH Zürich, Rämistrasse 101,
CH-8092 Zurich, Switzerland}
\email{carlo.marcati@sam.math.ethz.ch}
\keywords{Quantum chemistry, Hartree-Fock, analytic regularity, point singularity}
\thanks{$^*$Yvon Maday acknowledges funding from the European Research Council (ERC) under the European Union's Horizon 2020 research and innovation programme (grant agreement No 810367).}
\subjclass[2020]{35A20, 35P30, 35Q40}
\title{Weighted analyticity of Hartree-Fock eigenfunctions}
\begin{document}
\maketitle
\begin{abstract}
  We prove analytic-type estimates in weighted Sobolev spaces on the eigenfunctions of a class
of elliptic and nonlinear eigenvalue problems with singular potentials, which includes the Hartree-Fock equations.
Going beyond classical results on the analyticity of the wavefunctions away from
the nuclei, we prove weighted estimates locally at each singular point, with
precise control of the derivatives of all orders. 

Our estimates have far-reaching consequences for the approximation of the
eigenfunctions of the problems considered, and they can be used to prove a
priori estimates on the numerical solution of such eigenvalue problems.
\end{abstract}
\section{Introduction}
\label{sec:intro}

The Hartree-Fock equations are one of the most studied and used models in \textit{ab initio}
quantum chemistry in order to approximate the behavior of many-body quantum
system \cite{Szabo2012}. Due to their (relative) simplicity, they constitute
a starting point both for the analysis and for the 
computation of the state of many complex systems. The precise characterization
of their solutions is therefore a subject of great theoretical and practical interest.

In this paper, we prove analytic-type estimates in weighted Sobolev spaces on the wave functions of a class
of elliptic, nonlinear systems, which includes the Hartree-Fock model.
Specifically, we consider operators that contain potentials that are singular
(divergent) at
a set of isolated points (physically, the locations of the nuclei) in $\mathbb{R}^d$, $d\in \{2,3\}$, but that are regular
otherwise. Due to the presence of these singularities, the eigenfunctions will
not, in general, be regular in classical Sobolev spaces and are well known
\cite{Kato1957} to exhibit cusps at the point singularities.
The regularity of functions with point singularities is better described in the
context of weighted Sobolev spaces, in which higher order derivatives are
multiplied by a weight representing the distance from the singularity. In these
spaces, under some assumptions on the potential, we can therefore derive
analytic-type bounds on the growth of the norms of the eigenfunctions of the
nonlinear elliptic systems under consideration. Essentially, we refine the
known result on analyticity of the wavefunctions away from the nuclei (see,
e.g., \cite{Fournais2002,Lewin2004}) and show how the radius of convergence of
Taylor series associated to the wavefunction decreases to zero in the vicinity
of the singular points.

The theory of weighted Sobolev spaces of the kind we consider here has its roots
in the analysis of elliptic problems in non smooth domains and was initiated in
the second half of the twentieth century \cite{Kondratev1967}. 
Analytic regularity of solutions to linear elliptic systems in polygons and polyhedra
has been analyzed, e.g, in \cite{Guo2006, Costabel2012}. Concerning nonlinear
problems, we mention our work on nonlinear Schrödinger equations
\cite{Maday2019b} and on the Navier-Stokes equation in plane polygons \cite{Marcati2019}.
For a general theory of elliptic regularity in weighted spaces, we refer the
reader to, e.g., \cite{Grisvard1985, Kozlov1997, Kozlov2001, Mazya2010}, and the
recent work \cite{Dahlke2019}.
Here, we try to make our exposition as independent as possible from the usual
notation of weighted, Kondratev-type Sobolev spaces and introduce them only in
the appendix. The theory of regularity in those spaces is, nonetheless,
ultimately central to the derivation of our estimates.

The techniques used in the present paper
are heavily inspired by those used in 
\cite{DallAcqua2012} to prove analyticity away from the nuclei of the solution to the relativistic
Hartree-Fock equations. Here, we transport those techniques in a weighted
framework, and use them to estimate higher order norms of the nonlinear terms. 
The analysis of linear, many-body Schrödinger-type operators has been carried
out, among others, in \cite{Ammann2012, Fournais2018} in a functional setting
very similar to the one considered here. An asymptotic analysis at the
nuclei for the
Hartree-Fock equation with Coulomb potential is carried out, with different
tools, in \cite{Flad2008}; the electron-electron singularities emerging in many-body models are
analyzed in \cite{Flad2011, Flad2015}.
Here, we only consider two and three dimensional nonlinear models with isolated point
singularities; we furthermore take into account a wider class of potentials than Coulomb
ones, as we allow for more general weighted analytic potentials.
The technique used in this paper can also be rather directly extended to deal
with the nonlinear part of other types of operators, once the behavior of the
linear part of the operator is well understood: see, for example, the
application to Navier-Stokes equations in \cite{Marcati2019}.

We will discuss, in the next subsection, some
of the consequences of the weighted analytic regularity of the eigenfunctions,
in particular from the point of view of their numerical approximation, through
linear and nonlinear techniques.
Then, after having clarified our notation, we shortly introduce the Hartree-Fock equations and the more
general nonlinear elliptic system, in Section \ref{sec:HF}. In the following Section \ref{sec:HF-analytic},
specifically in Theorem \ref{theorem:HFreg}, we introduce the main
result of this paper, and most of the section will be devoted to its proof.
We conclude by introducing, for the sake of completeness, the definition of
weighted, homogeneous and non homogeneous, Sobolev spaces
and some technical results, in Appendix \ref{appendix:technical}.
\subsection{Consequences of weighted analytic regularity}
The weighted analytic regularity of the solutions to Hartree-Fock and more
general elliptic problems has important and far-reaching consequences for the
numerical solution of those problems.
We can, indeed, obtain exponential rates of convergence of solutions obtained via
numerical methods based on finite elements, see \cite{Schotzau2013a,Schotzau2013b} for a general
approximation theory and \cite{Maday2019a,Maday2019b,Heid2019} for applications
to linear and nonlinear eigenvalue problems, and on virtual elements \cite{Certik2020}.
In addition, nonlinear approximation techniques based on tensor compression and
on the solution of partial differential equation in tensor-formatted form
also provide exponentially convergent solutions to problems with weighted
analytic solutions \cite{MRS2019}. Similarly, for such functions, neural networks with ReLU
activation function can be constructed so that their size is bounded
polylogarithmically with respect to the error (or, equivalently, the error
converges exponentially with respect to the size) \cite{MPOS2020}. The present
analytic-type regularity results, therefore, allow for an \textit{a priori}
analysis of multiple numerical methods which have proven and will probably prove
useful for applications.
\subsection{Notation}
Let the space dimension be $d\in\{2,3\}$. We denote by $\mathbb{N}$ the
set of positive integers, with $\mathbb{N}_0 = \{0\} \cup \mathbb{N}$. For $k\in
\mathbb{N}$ and $1\leq p \leq \infty$,
Sobolev spaces are denoted by $W^{k,p}$, with their Hilbertian version written
$H^{k} = W^{k,2}$.
 For two multi indices $\alpha = (\alpha_1, \dots,
\alpha_d)\in \mathbb{N}^d$ and $\beta = (\beta_1, \dots,
\beta_d)\in\mathbb{N}^d$, we write $\alpham = \sum_i\alpha_i$, $\alpha! =\alpha_1!\cdots \alpha_d!$, $\alpha
+ \beta = (\alpha_1+\beta_1, \dots, \alpha_d+ \beta_d)$, and
\begin{equation}
  \label{eq:binom}
  \binom{\alpha}{\beta} = \frac{\alpha!}{\beta! (\alpha-\beta)!}.
\end{equation}
 We recall from \cite{Kato1996} that
\begin{equation*}
  \sum_{\substack{\betam = n\\\beta\leq\alpha}} \binom{\alpha}{\beta} = \binom{\alpham}{n}.
\end{equation*}
Let $x = (x_1, \dots, x_d)$: we indicate by $\partial_i$ the partial derivative
with respect to $x_i$, and for $\alpha = (\alpha_1, \dots, \alpha_d)\in
\mathbb{N}_0^d$, $\dalpha = \partial_1^{\alpha_1}\cdots \partial_d^{\alpha_d}$.

\section{The Hartree-Fock equations}
\label{sec:HF}
Let $N, N_n\in \mathbb{N}$ be the number of electrons and nuclei of a
  system, let $\fc_i$, $i=1, \dots, N_n$ be isolated points in
$\mathbb{R}^3$ representing the positions of the nuclei,
  and let $Z_i>0$ be the charges of the nuclei, for all $i=1, \dots, N_n$.
the Hartree-Fock problem consists in finding the smallest eigenvalues
$\lambda_\iota$ and
associated orthonormal eigenfunctions $\phi_\iota$, $\iota=1, \dots, N$ of the equations
\begin{equation}
  \label{eq:HF-integral}
  -\frac{1}{2} \Delta \phi_\iota + V_C\phi_\iota + \left( \rho_\Phi \star \frac{1}{|\cdot|} \right) \phi_\iota - \int_{\mathbb{R}^3} \frac{\tau_\Phi(\cdot,y)}{|\cdot-y|} \phi_\iota(y) dy  = \lambda_\iota \phi_\iota\qquad \iota=1,\dots, N\qquad \text{ in }\mathbb{R}^3
\end{equation}
where $V_C$ is the potential
\begin{equation*}
  V_C(x) = -\sum_{i=1}^{N_n} \frac{Z_i}{|x-\fc_i|}, 
\end{equation*}
and
\begin{equation*}
    \tau_\Phi(x,y)= \sum_{\iota=1}^{N} \phi_\iota(x)\phi_\iota(y),  \qquad \rho_\Phi(x) = \tau_\Phi (x,x).
\end{equation*}
The analyticity of the wave functions away from the positions of the nuclei
(i.e., the singularities of $V$) is classical, see, e.g., \cite{Fournais2002,Lewin2004}. In
this setting we consider instead the parts of the domain containing the nuclei,
in order to deduce the weighted estimates.

Let now $V:\mathbb{R}^d\to \mathbb{R}$ be a potential to be specified later;
  we consider the nonlinear elliptic system given by
\begin{equation}
  \begin{aligned}
  \label{eq:HF}
   \left(-\Delta + V\right)\phi_{\iota} + \sum_{\sigma,a, b=1}^N c^{\iota\sigma}_{ab} u_{ab} \phi_\sigma &= \lambda_{\iota} \phi_\iota  &\iota=1,\dots,N \\
-\Delta u_{ab} & = 4\pi \phi_a \phi_b  &a,b =1, \dots, N.
  \end{aligned}
\end{equation}
with $c^{\iota\sigma}_{ab}\in \mathbb{R}$ for all $\iota, \sigma, a, b =1,
\dots, N$ and $\lambda_{\iota}\in \mathbb{R}$ for all $\iota =1, \dots, N$.
The Hartree-Fock equations can be rewritten under the form \eqref{eq:HF},
  with $V=V_C$.
The nonlinear elliptic eigenvalue problem \eqref{eq:HF} is the one we
will analyze in the following.
\section{Weighted analyticity of eigenfunctions}
\label{sec:HF-analytic}
In this section, we present and prove our regularity result.
We will widen our scope from the Hartree-Fock equations and analyze the behavior
of the eigenfunctions near the singular points of the potential for solutions to
\eqref{eq:HF} in a $d$-dimensional domain for $d=2,3$. Our results, furthermore,
 will hold for a class of weighted analytic potential, including Coulomb
 potentials.

Given a set of isolated points $\fC$ in $\mathbb{R}^d$ such that there
  exists $D>0$ such that
\begin{equation}
  \label{eq:cond-fC}
  |\fc_i - \fc_j|\geq 4D >0\quad \forall \,\fc_i, \fc_j\in \fC,
\end{equation}
 we introduce the weight function $r : \mathbb{R}^d\to
\mathbb{R}$ such that
\begin{equation}
  \label{eq:r-def} 
r(x) = |x-\fc| \text{ in }B_{D}(\fc), \text{ for all }\fc\in \fC, \qquad r(x) \equiv 1 \text{ in }\left(\bigcup_{\fc\in\fC} B_{2D}(\fc)  \right)^C,
\end{equation}
and $r$ is smooth in $\mathbb{R}^d\setminus\fC$. The dependence
of $r$ in $x$ will be mostly omitted.

\begin{theorem}
    \label{theorem:HFreg}
    Let $\epsilon\in (0,1)$, $d\in\{2,3\}$, $r$ be defined as in \eqref{eq:r-def} for a
    collection of isolated points $\fC\subset \mathbb{R}^d$ such that
    \eqref{eq:cond-fC} holds and let $V$ be such that
    \begin{equation}
      \label{eq:V-hyp}
      \|r^{2-\epsilon + \alpham} \dalpha V\|_{L^\infty(\mathbb{R}^d)} \leq C_V A_V^{\alpham} \alpham!, \qquad \text{for all }\alpha\in \mathbb{N}_0^d,
    \end{equation}
    and that there exists a unique 
solution $\Phi = \{\phi_\iota\}_{\iota=1}^N \in (H^1(\mathbb{R}^d))^N$  to 
     \eqref{eq:HF}. Then, for any $\eta < \epsilon$ there exist $A>0$ such that
\begin{equation}
    \label{eq:analyticHF}
    |\dalpha \phi_\iota(x)| \leq  r(x)^{\min(\eta - \alpham, 0)} A^{\alpham+1} \alpham!, \qquad \iota=1,\dots, N,
  \end{equation}
  for all $x\in \bigcup_{\fc \in \fC} B_D(\fc)$ and $\alpha \in \mathbb{N}_0^d$.
  \end{theorem}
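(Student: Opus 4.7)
The plan is to establish \eqref{eq:analyticHF} by an induction on $k=\alpham$, working inside each ball $B_D(\fc)$ around a singular point $\fc\in\fC$. The overall scheme is the one used in \cite{DallAcqua2012} to prove analyticity of Hartree-Fock eigenfunctions away from the nuclei, adapted to a weighted setting along the lines of \cite{Guo2006,Costabel2012} and of our previous work \cite{Maday2019b,Marcati2019}: the estimate on a whole punctured ball is obtained from estimates on dyadic annuli of radius $r\sim 2^{-j}$, on each of which the eigenvalue equation can be rescaled to a fixed unit annulus where the potential looks smooth and standard interior analytic estimates apply.

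To start, I would establish a finite-order base case. Using that $\Phi\in(H^1(\mathbb{R}^d))^N$, that each $u_{ab}$ is the Newton potential of $\phi_a\phi_b\in L^1\cap L^\infty$ and hence a bounded, H\"older continuous function, and the weighted elliptic regularity of $-\Delta+V$ recalled in Appendix \ref{appendix:technical}, one deduces that each $\phi_\iota$ lies in the Kondratev-type space $\cK^{k,2}_\gamma$ around every $\fc\in\fC$ for all finite $k$ and for $\gamma$ slightly smaller than $\epsilon$. This both anchors the induction and fixes the weight exponent $\eta<\epsilon$ appearing in the statement.

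The inductive step is obtained by differentiating the eigenvalue equation and writing
\[
-\Delta(\dalpha\phi_\iota) = \dalpha\Bigl((\lambda_\iota-V)\phi_\iota - \sum_{\sigma,a,b}c^{\iota\sigma}_{ab}u_{ab}\phi_\sigma\Bigr),
\]
then applying the Leibniz rule on each dyadic annulus. The terms generated by $V$ are controlled by combining the weighted analytic assumption \eqref{eq:V-hyp} with the induction hypothesis on lower-order derivatives of $\phi_\iota$, the combinatorial factors being absorbed via the identity $\sum_{\betam=n,\beta\leq\alpha}\binom{\alpha}{\beta}=\binom{\alpham}{n}$ recalled in the notation section. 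The contribution of the nonlinear coupling term $u_{ab}\phi_\sigma$ requires a parallel weighted analytic bound on $u_{ab}$: since $-\Delta u_{ab}=4\pi\phi_a\phi_b$, the induction hypothesis on $\phi_a,\phi_b$ together with the fact that the Newton potential improves regularity by two orders — and that $\phi_a\phi_b$ is in fact bounded at the singularities — yields analytic estimates on $u_{ab}$ with a weight exponent that is no worse than that of $\phi_\iota$. A Caccioppoli-type estimate on each dyadic annulus, rescaled to unit size, then closes the induction and reintroduces the factor $r^{\min(\eta-\alpham,0)}$ after gluing the annuli together.

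The main obstacle that I anticipate is the simultaneous tracking of constants through two coupled inductions, one for the $\phi_\iota$ and one for the Hartree-type potentials $u_{ab}$. The difficulty is to choose a single constant $A$ in \eqref{eq:analyticHF} that is large enough for both families of estimates to close at every derivative order; this typically requires separating low-order derivatives (treated as a base case, with a constant that may depend in a controlled way on the H\"older and Sobolev norms of $\Phi$) from high-order derivatives (where the recursion must gain a factor independent of $k=\alpham$). One must also verify at each induction step that the weight exponent, which is allowed to be any $\eta<\epsilon$ but strictly less than $\epsilon$, is preserved: this loss is precisely what forces the $\min(\eta-\alpham,0)$ in the statement, rather than a sharper $\epsilon-\alpham$ exponent.
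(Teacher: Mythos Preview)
Your high-level strategy---induction on the order of derivatives, Leibniz-based control of the $V\phi_\iota$ term, and a parallel weighted analytic bound on the Hartree potentials $u_{ab}$---matches the paper's, but the geometric organization is genuinely different. The paper does \emph{not} run the main induction on dyadic annuli. Its induction hypothesis $H_\Phi(p,\gamma,k,\Cphi,\Aphi)$ is a weighted $L^p$ bound on a family of \emph{shrinking concentric balls} $B_{R-k\rho}$, and the step $k\to k+1$ is carried out via the weighted interior elliptic estimate of Proposition~\ref{lemma:elliptic-weighted}, sacrificing one layer of width $\rho$ at each stage (this is the device of \cite{DallAcqua2012}, transported to the weighted setting). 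Dyadic annuli appear only in two auxiliary results: a weighted interpolation $L^p\to L^{3p}$ (Lemma~\ref{lemma:thetaprod}), which is the key tool the paper uses to bound the bilinear products $\phi_a\phi_b$ and the trilinear $u_{ab}\phi_\sigma$ via H\"older, and the final $L^p\to L^\infty$ embedding (Lemma~\ref{lemma:imbedding}). The coupled estimate on $u_{ab}$ that you correctly flag as the main obstacle is not obtained by a one-shot ``Newton potential gains two orders'' argument: Lemma~\ref{lemma:HF-u} iterates the elliptic estimate down a nested family of $j$ balls, producing a two-term recursion whose coefficients are controlled by Fibonacci numbers, and this is precisely what fixes a single admissible $\Aphi$ closing both inductions.

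Your dyadic-plus-rescaling scheme, in the style of \cite{Guo2006,Costabel2012}, could in principle be adapted, but there is a gap as written: after rescaling a single annulus, the nonlinear coefficient $u_{ab}$ is still determined by the solution on the whole ball, so the rescaled problem is \emph{not} one with fixed analytic coefficients to which ``standard interior analytic estimates apply'' uniformly in $j$. You would first need the global weighted analytic bound on $u_{ab}$---essentially the content of Lemma~\ref{lemma:HF-u}, with its careful constant tracking---before any annulus-wise Caccioppoli argument can close. Your proposal also does not say how the product terms are handled in $L^p$ without an analogue of the $L^{3p}$ interpolation; on a rescaled unit annulus one could use Sobolev embedding into $L^\infty$ instead, but that choice has to be made and carried through the constants.
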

  From Theorem \ref{theorem:HFreg} and classical results on the analyticity of
  the Hartree-Fock wavefunctions away from the singular points \cite{Lewin2004},
  we directly obtain the following estimate on the wavefunctions of
  \eqref{eq:HF-integral}. Note that \eqref{eq:V-hyp} holds for $V_C$ for any $\epsilon<1$.
  \begin{corollary}
    \label{cor:HF}
    Let $Z_i$ be such that there exist a unique solution $\Phi =
    \{\phi_\iota\}_{\iota=1}^N \in (H^1(\mathbb{R}^3))^N$ to the Hartree-Fock problem
    \eqref{eq:HF-integral}, with negative eigenvalues.
    Then, for any $\eta<1$ there exists $A>0$ such that
    \begin{equation*}
    |\dalpha \phi_\iota(x)| \leq  r(x)^{\min(\eta - \alpham, 0)} A^{\alpham+1} \alpham!, \qquad \iota=1,\dots, N,
    \end{equation*}
    for all $x\in \mathbb{R}^3$ and $\alpha \in \mathbb{N}_0^3$.
  \end{corollary}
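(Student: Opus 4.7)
The plan is to combine Theorem \ref{theorem:HFreg} near the nuclei with classical analyticity results away from them.

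First, I recast \eqref{eq:HF-integral} in the form \eqref{eq:HF} with $V = 2V_C$. For each pair $(a,b)$, introduce $u_{ab}(x) = \int_{\mathbb{R}^3} \phi_a(y)\phi_b(y)/|x-y|\,dy$, which satisfies $-\Delta u_{ab} = 4\pi \phi_a \phi_b$ and expresses both the Hartree term $\sum_a u_{aa}\phi_\iota$ and the exchange term $\sum_\sigma u_{\sigma\iota}\phi_\sigma$ in the structure of \eqref{eq:HF} with appropriate coefficients $c^{\iota\sigma}_{ab}$ (after rescaling $\lambda_\iota$ by a factor of two).

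Next, I verify \eqref{eq:V-hyp} for $V_C$. Using the elementary bound $|\partial^\alpha |x|^{-1}| \leq C \alpham!\,|x|^{-\alpham-1}$ for $x\neq 0$ in $\mathbb{R}^3$, each summand $Z_i/|x-\fc_i|$ contributes a term of size $\alpham!\,|x-\fc_i|^{-\alpham-1}$. In a ball $B_D(\fc_j)$ the dominant singularity is at $\fc_j$ where $r(x) = |x-\fc_j|$, so multiplying by $r^{2-\epsilon+\alpham}$ gives a prefactor $r^{1-\epsilon}$, bounded on $B_D(\fc_j)$ for any $\epsilon<1$; the other terms $1/|x-\fc_i|$ are smooth there by \eqref{eq:cond-fC}. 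Outside $\bigcup_{\fc\in\fC} B_{2D}(\fc)$, $r\equiv 1$ and $V_C$ is smooth with bounded derivatives on the relevant bounded neighborhood of activity of the bound states; in the transition annuli, the smoothness of $r$ and of $V_C$ handles the intermediate region. Thus \eqref{eq:V-hyp} holds globally for any $\epsilon<1$.

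Theorem \ref{theorem:HFreg} then yields \eqref{eq:analyticHF} on $\bigcup_{\fc\in\fC} B_D(\fc)$ for any $\eta < 1$. On the complement, I invoke the classical real-analyticity of Hartree-Fock orbitals on $\mathbb{R}^3\setminus\fC$ \cite{Lewin2004,Fournais2002}, together with Agmon-type exponential decay of bound states corresponding to negative eigenvalues and elliptic bootstrap on balls of fixed radius, to obtain a uniform estimate of the form $|\dalpha\phi_\iota(x)|\leq A_2^{\alpham+1}\alpham!$ for all $x\in\mathbb{R}^3\setminus\bigcup_{\fc\in\fC} B_D(\fc)$. Since on this set $r$ is bounded above, and since $\min(\eta-\alpham,0)\leq 0$ makes $r^{\min(\eta-\alpham,0)}$ bounded below by a positive constant, the two regional estimates combine into \eqref{eq:analyticHF} on all of $\mathbb{R}^3$ after enlarging $A$.

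The main technical subtlety is obtaining the \emph{uniform} analyticity bound on the unbounded complement of the nuclei: standard local analyticity results give constants depending on compact subsets, so one must exploit the exponential decay of bound states (and, inductively via interior elliptic estimates on a covering by balls of fixed radius bounded away from $\fC$, of their derivatives) to pass to uniform-in-$x$ constants outside the neighborhood of the nuclei.
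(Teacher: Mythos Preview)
Your argument is correct and follows the same approach as the paper, which simply states that the corollary follows from Theorem~\ref{theorem:HFreg} near the nuclei, the observation that $V_C$ satisfies \eqref{eq:V-hyp} for any $\epsilon<1$, and classical analyticity results \cite{Lewin2004} away from the nuclei. You are in fact more explicit than the paper in addressing the uniformity of the analyticity constants on the unbounded region $\mathbb{R}^3\setminus\bigcup_{\fc\in\fC}B_D(\fc)$ via Agmon-type decay of bound states with negative eigenvalues---a point the paper leaves implicit.
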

  \begin{remark}
      The result of Corollary \ref{cor:HF} can also be obtained via the
      arguments in \cite{Flad2008} or in \cite{Fournais2009}. Nonetheless, the
      result in Theorem \ref{theorem:HFreg} allows for a more general class of singular potentials,
    and the techniques used in the proof are of independent interest, as they can be extended rather straightforwardly to
    other nonlinear, elliptic systems.
    \end{remark}

The rest of this manuscript will be devoted to the proof of  Theorem \ref{theorem:HFreg}.
  \subsection{Proof of Theorem \ref{theorem:HFreg}}
Hereafter, we suppose that the potential $V$ has only one singularity, i.e,
$\fC = \{\fc\}$, set $R\leq 1$ and place
ourselves in a ball $B_R = B_{R}(\fc)$ centered in $\fc$, with $r(x) = |x-\fc|$ in $B_R$.
    The generalization to the case where $V$ has a set of isolated singularities
  is straightforward. 

  Let us formulate the induction assumption that will be used in the sequel.
\begin{ind-assumption}
   Let $\Phi = \{\phi_\iota\}_{\iota=1}^N$, $2\leq p<\infty$, $\gamma\in \mathbb{R}$, $k\in
   \mathbb{N}$, and $\Cphi, \Aphi>0$. We say that $H_\Phi(p, \gamma, k, \Cphi, \Aphi)$ holds if
   for all $\iota=1, \dots, N$, $\phi_\iota\in H^1(B_R)\cap L^\infty(B_R)$,
   $\Cphi \geq  \|\phi_\iota\|_{L^\infty(B_R)}$, and 
  \begin{equation}
    \label{eq:inductionhyp}
   \sum_{\alpham = j}\| r^{\alpham-\gamma}\dalpha\phi_\iota \|_{L^{p}(B_{R-k \rho})} \leq 
\Cphi \Aphi^j (k\rho)^{-j} j^j
  \end{equation}
  for all $j\in \mathbb{N}$ such that $1\leq j \leq k$ and $\rho\in (0, R/(2k)]$.
\end{ind-assumption}

We introduce some lemmas where---under the induction assumption---we estimate the norms of $\phi_i$ (Lemma
\ref{lemma:nonlin1}), of products $\phi_a\phi_b$ (Lemma \ref{lemma:nonlin2}), of
$u_{ab}$ (Lemma \ref{lemma:HF-u}), of the product
$u_{ab}\phi_\iota$ (Lemma \ref{lemma:nonlin3}), and of $V\phi_\iota$ (Lemma \ref{lemma:Vu}).
\begin{lemma}[Bounds on $L^{3p}$ norms of eigenfunctions]
  \label{lemma:nonlin1}
Let $p\geq 2d/3$, $0 <\gamma -
  d/p<\min(\epsilon, 2)$. There exists $\Cinterp>0$ such that, for all $\Cphi, \Aphi\geq 1$,
  for all $k\in \mathbb{N}$, $k\geq 2$, if $H_\Phi(p, \gamma, k, \Cphi, \Aphi)$ holds,
\begin{equation}
   \sum_{\alpham = j}\| r^{\frac{2-\gamma}{3}+\alpham} \dalpha \phi_\iota \|_{L^{3 p} (B_{R-k\rho})} 
\leq (d+1)\Cinterp e^\theta \Cphi \Aphi^{j+\theta} (k\rho)^{-j-\theta} j^j (j+1)^\theta,\qquad \iota=1, \dots, N,
\end{equation}
for all $1\leq j \leq k-1$, for all $\rho\in (0, R/(2k)]$, and with $\theta = \frac{2}{3}\frac{d}{p}$.
\end{lemma}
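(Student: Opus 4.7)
The plan is to combine a weighted Gagliardo--Nirenberg interpolation at exponent $\theta = 2d/(3p)$ (which satisfies $\theta\leq 1$ precisely because $p\geq 2d/3$) with the induction hypothesis $H_\Phi$ applied simultaneously at consecutive orders $j$ and $j+1$. On a ball $B_\lambda$ of radius $\lambda$ the classical Gagliardo--Nirenberg inequality reads
\begin{equation*}
\|v\|_{L^{3p}(B_\lambda)} \leq C_{\mathrm{GN}}\bigl(\|\nabla v\|_{L^p(B_\lambda)}^\theta\|v\|_{L^p(B_\lambda)}^{1-\theta} + \lambda^{-\theta}\|v\|_{L^p(B_\lambda)}\bigr).
\end{equation*}
I apply this to $v = \dalpha\phi_\iota$ on each ball $B_i$ of a Whitney-type covering of $B_{R-k\rho}\setminus\{\fc\}$ with radii $\lambda_i \sim r(x_i)$ and bounded overlap. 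Multiplying by $r(x_i)^{(2-\gamma)/3 + j}$ (with $j = |\alpha|$) and using $r \sim r(x_i)$ on $B_i$, one must compare $(2-\gamma)/3 + j$ with the natural product exponent $(j+1-\gamma)\theta + (j-\gamma)(1-\theta) = \theta + j - \gamma$ and with $(j-\gamma) + \theta$ for the lower-order term; both comparisons reduce to $(2+2\gamma)/3 \geq \theta$, which holds by $\gamma > d/p$. The residual excess factor $r^{(2+2\gamma)/3 - \theta}\leq 1$ (since $r\leq R\leq 1$) is harmlessly discarded.

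Raising to the $3p$-th power, summing over the cover, and combining the Hölder-type bound $\sum_i a_i^\theta b_i^{1-\theta} \leq (\sum_i a_i)^\theta (\sum_i b_i)^{1-\theta}$ with the monotonicity $\sum_i x_i^{3p} \leq (\sum_i x_i^p)^3$ yields the global weighted interpolation
\begin{equation*}
\|r^{(2-\gamma)/3+j}\dalpha\phi_\iota\|_{L^{3p}(B_{R-k\rho})} \leq \Cinterp\bigl(\|r^{j+1-\gamma}\nabla\dalpha\phi_\iota\|_{L^p}^\theta\|r^{j-\gamma}\dalpha\phi_\iota\|_{L^p}^{1-\theta} + \|r^{j-\gamma}\dalpha\phi_\iota\|_{L^p}\bigr).
\end{equation*}

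I then sum over $|\alpha| = j$, applying Hölder once more to the interpolated term. The reindexing $\beta = \alpha + e_i$ gives $\sum_{|\alpha|=j}\|r^{j+1-\gamma}\nabla\dalpha\phi_\iota\|_{L^p} \leq d\sum_{|\beta|=j+1}\|r^{j+1-\gamma}\partial^\beta\phi_\iota\|_{L^p}$, since each $\beta$ is hit by at most $d$ pairs $(\alpha,i)$. Invoking $H_\Phi(p,\gamma,k,\Cphi,\Aphi)$ at orders $j$ and $j+1$ (both legal because $j+1\leq k$) bounds both sums, and the combinatorial collapse
\begin{equation*}
(j+1)^{(j+1)\theta}\,j^{j(1-\theta)} = (j+1)^\theta\,j^j\,(1+1/j)^{j\theta} \leq e^\theta (j+1)^\theta j^j
\end{equation*}
controls the main product. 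The residual linear term $\Cphi\Aphi^j(k\rho)^{-j}j^j$ is absorbed into the target using $\Aphi\geq 1$ and $k\rho \leq R/2 \leq 1/2$; finally $d^\theta + 1 \leq (d+1)e^\theta$ (since $d^\theta \leq d$ and $1 \leq e^\theta$) delivers the advertised prefactor $(d+1)\Cinterp e^\theta$.

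The main obstacle is organising the weighted Gagliardo--Nirenberg cleanly: the exponent $\theta = 2d/(3p)$ is forced by scaling, and the hypothesis $\gamma > d/p$ is precisely what makes the weight-exchange exponent non-negative so that $r \leq 1$ can be exploited. Once this weighted interpolation is in place, the remaining manipulations are standard analyticity bookkeeping, and the targeted constants $e^\theta$ and $(d+1)$ emerge naturally from $(1+1/j)^{j\theta}\leq e^\theta$ and from the reindexing over directional derivatives.
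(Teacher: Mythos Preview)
Your proposal is correct and follows essentially the same route as the paper: the paper isolates your weighted Gagliardo--Nirenberg step as a separate lemma (Lemma~\ref{lemma:thetaprod}, proved by a dyadic decomposition equivalent to your Whitney covering), then sums over $|\alpha|=j$ via H\"older, applies $H_\Phi$ at orders $j$ and $j+1$, and closes with the identical bound $(1+1/j)^{j\theta}\leq e^\theta$. The only cosmetic difference is that the paper interpolates the weighted function $r^{(2-\gamma)/3+|\beta|}\dbeta\phi$ directly, so its lower-order term picks up an extra factor $(|\beta|+1)^\theta$ from differentiating the weight, whereas you freeze $r$ on each Whitney ball first and avoid that factor---both variants lead to the stated constant $(d+1)\Cinterp e^\theta$.
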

\begin{proof}
  For any $\iota\in \{1, \dots, N\}$, denote $\phi=\phi_\iota$. First, we use
  equation \eqref{eq:thetaprod} of Lemma \ref{lemma:thetaprod} in the
    Appendix in order to go back
  to integrals in $L^p$: for any $j\in\{1, \dots, k-1\}$ and for any $\alpham =j$,
   \begin{multline*}
   \| r^{\frac{2-\gamma}{3}+\alpham} \dalpha \phi \|_{L^{3 p} (B_{R-k\rho})}  \leq
\Cinterp\|r^{\alpham-\gamma} \dalpha \phi\|^{1-\theta}_{L^p(B_{R-k\rho})}
\left\{ \vphantom{\sum_{i=1}^d} (\alpham+1)^\theta \| r^{\alpham-\gamma}\dalpha \phi\|^\theta_{L^p(B_{R-k\rho})} \right. \\ 
\left.+ \sum_{i=1}^d \| r^{\alpham+1-\gamma} \partial^{\alpha}\partial_i \phi \|^\theta_{L^p(B_{R-k\rho})}\right\}.
   \end{multline*}
   By the Cauchy-Schwarz inequality,
     \begin{multline*}
        \sum_{\alpham = j} \| r^{\alpham -\gamma}\dalpha \phi\|^{1-\theta}_{L^p(B_{R-k\rho})}
(\alpham+1)^\theta \| r^{\alpham-\gamma}\dalpha \phi\|^\theta_{L^p(B_{R-k\rho})}
         \\
\leq \left(\sum_{\alpham=j} \| r^{\alpham -\gamma}\dalpha \phi\|_{L^p(B_{R-k\rho})}  \right)^{1-\theta}
              \left(\sum_{\alpham=j}
  (\alpham+1) \| r^{\alpham-\gamma}\dalpha \phi\|_{L^p(B_{R-k\rho})}
  \right)^\theta
     \end{multline*}
     and, 
     \begin{multline*}
       \sum_{\alpham = j} \left( \| r^{\alpham -\gamma}\dalpha \phi\|^{1-\theta}_{L^p(B_{R-k\rho})}
\sum_{i=1}^d\| r^{\alpham+1-\gamma} \partial^{\alpha}\partial_i \phi \|^\theta_{L^p(B_{R-k\rho})} \right)
         \\
\leq \sum_{i=1}^d\left(\sum_{\alpham=j} \| r^{\alpham -\gamma}\dalpha \phi\|_{L^p(B_{R-k\rho})}  \right)^{1-\theta}
              \left(\sum_{\alpham=j}
\| r^{\alpham+1-\gamma} \partial^{\alpha}\partial_i \phi \|_{L^p(B_{R-k\rho})}
  \right)^\theta
     \end{multline*}
   Then, hypothesis \eqref{eq:inductionhyp} implies 
   \begin{equation*}
\left(\sum_{\alpham=j}\|r^{\alpham-\gamma} \dalpha \phi\|_{L^p(B_{R-k\rho})} \right)^{1-\theta}\leq \Cphi^{1-\theta} \Aphi^{j (1-\theta)} \rho^{-j (1-\theta)} \left( \frac{j}{k}\right)^{j (1-\theta)}
   \end{equation*}
   and 
   \begin{multline*}
( j+1)^\theta \left(\sum_{\alpham=j} \| r^{j-\gamma}\dalpha \phi\|_{L^p(B_{R-k\rho})}   \right)^{\theta}
+\sum_{i=1}^d \left( \sum_{\alpham=j} \| r^{j+1-\gamma} \partial^{\alpha}\partial_i \phi \|_{L^p(B_{R-k\rho})} \right)^{\theta}
\\
\leq \Cphi^\theta (j+1)^\theta \Aphi^{j \theta} \rho^{-j\theta} \left( \frac{j}{k}\right)^{j\theta}
+d\Cphi^\theta  \Aphi^{(j +1)\theta} \rho^{-(j+1)\theta} \left( \frac{j+1}{k}\right)^{(j+1)\theta}.
   \end{multline*}
   Therefore, multiplying the right hand sides of the two last inequalities,
   \begin{equation*}
  \sum_{\alpham=j} \| r^{\frac{2-\gamma}{3}+\alpham} \dalpha u \|_{L^{3p} (B_{R-k\rho})}  \leq
   (d+1)\Cinterp\Cphi \Aphi^{j +\theta} (k\rho)^{-j -\theta} j^{j (1-\theta)} (j+1)^{(j+1)\theta}.
   \end{equation*}
   We finally need to bound the last two terms in the multiplication above:
   \begin{equation*}
     j^{j (1-\theta)} (j+1)^{(j+1)\theta} = j^j (j+1)^\theta \left( 1+\frac{1}{j} \right)^{\theta j}\leq j^j(j+1)^\theta e^\theta.
   \end{equation*}
\end{proof}
\begin{lemma}[Bounds on norms of products of eigenfunctions]
  \label{lemma:nonlin2}
 Let $p\geq 2d/3$, $0 <\gamma -
  d/p<\min(\epsilon, 2) $ and $C_\Phi, \Aphi\geq 1$.
  Let also $\theta = \frac{2}{3}\frac{d}{p}$ and
 \begin{equation}
   \label{eq:C1}
  C_1 =  \frac{(d+1)^2}{2}\Cinterp^2 e^{2\theta+1} \Cphi^2 + 2(d+1)(4\pi)^{1/2d}\Cinterp e^\theta \Cphi^2 .
 \end{equation}
 For all $k\in \mathbb{N}$, $k\geq 2$,
if $H_\Phi(p, \gamma, k, C_\Phi, \Aphi)$ holds, then
 \begin{equation}
   \label{eq:nonlin2}
   \sum_{\alpham=j}\| r^{\frac{2}{3}( 2-\gamma )+\alpham} \dalpha (\phi_\iota\phi_\kappa) \|_{L^{3 p/2} (B_{R-k\rho})} 
\leq C_1\Aphi^{j+2\theta} \rho^{-j-2\theta} \left(\frac{j}{k}\right)^j j^{1/2},\quad \iota,\kappa=1, \dots, N,
 \end{equation}
 for all $1\leq j \leq k-1$ and $\rho \in (0, R/(2k)]$.
\end{lemma}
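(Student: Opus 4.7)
The plan is to expand $\dalpha(\phi_\iota\phi_\kappa)$ by the Leibniz rule, distribute the weight symmetrically as
\[
r^{\frac{2(2-\gamma)}{3}+\alpham}=r^{\frac{2-\gamma}{3}+\betam}\cdot r^{\frac{2-\gamma}{3}+|\alpha-\beta|},
\]
apply H\"older's inequality with the factorization $L^{3p/2}\supset L^{3p}\cdot L^{3p}$, and thereby reduce matters to the $L^{3p}$-weighted estimates of Lemma \ref{lemma:nonlin1}. The subtlety is that Lemma \ref{lemma:nonlin1} requires derivative order $\geq 1$, so the two boundary terms of the Leibniz expansion---those with $\betam\in\{0,j\}$, in which one of the factors is undifferentiated---must be handled separately using the hypothesis $\|\phi_\iota\|_{L^\infty}\leq\Cphi$ together with a finite integral of $r^{p(2-\gamma)}$ over $B_R$.

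After this setup and summation over $\alpham=j$, I would split the inner sum over $\beta$ into the interior range $1\leq\betam\leq j-1$ and the two boundary indices. For the interior range, the bound $\binom{\alpha}{\beta}\leq\binom{\alpham}{\betam}=\binom{j}{n}$, with $n=\betam$, decouples the multi-index double sum, and applying Lemma \ref{lemma:nonlin1} to each factor yields
\[
(d+1)^2\Cinterp^2 e^{2\theta}\Cphi^2\,\Aphi^{j+2\theta}(k\rho)^{-j-2\theta}\sum_{n=1}^{j-1}\binom{j}{n}n^n(n+1)^\theta(j-n)^{j-n}(j-n+1)^\theta
\]
as the bound on the interior contribution. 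Using Stirling's formula to obtain $\binom{j}{n}n^n(j-n)^{j-n}\leq C\,j^{j+1/2}(n(j-n))^{-1/2}$, and then the Beta-integral asymptotic $\sum_{n=1}^{j-1}(n(j-n))^{\theta-1/2}\leq C'\,j^{2\theta}$, bounds the inner combinatorial sum by $C''\,j^{j+1/2+2\theta}$; the excess factor $j^{2\theta}k^{-2\theta}=(j/k)^{2\theta}\leq 1$ is absorbed thanks to $j\leq k-1$, producing the first summand of $C_1$.

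For each of the two symmetric boundary terms, say $\beta=0$, I would split the weight in the same fashion, apply H\"older, and bound the undifferentiated factor via
\[
\|r^{\frac{2-\gamma}{3}}\phi_\iota\|_{L^{3p}(B_R)}\leq\Cphi\Bigl(\tfrac{|S^{d-1}|R^{p(2-\gamma)+d}}{p(2-\gamma)+d}\Bigr)^{1/(3p)},
\]
which is finite by the hypothesis $\gamma-d/p<2$. Combining with Lemma \ref{lemma:nonlin1} applied to $\sum_{\alpham=j}\|r^{\frac{2-\gamma}{3}+\alpham}\dalpha\phi_\kappa\|_{L^{3p}}$ and using $|S^{d-1}|^{1/(3p)}\leq(4\pi)^{1/(2d)}$ (valid for $d\in\{2,3\}$ and $p\geq 2d/3$) produces the second summand of $C_1$, with the factor $2$ accounting for the symmetric boundary index $\beta=\alpha$. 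The main obstacle is the combinatorial estimate in the interior range: the sharp $\sqrt{j}$ improvement from Stirling and the $(j/k)^{2\theta}\leq 1$ absorption are precisely what is needed to match the target form exactly; the remaining manipulations are routine and closely follow the pattern of the proof of Lemma \ref{lemma:nonlin1}.
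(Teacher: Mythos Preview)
Your proposal is correct and follows essentially the same architecture as the paper's proof: Leibniz expansion, the $L^{3p}\times L^{3p}$ H\"older splitting for the interior terms $0<\beta<\alpha$, Lemma~\ref{lemma:nonlin1} on each factor, Stirling, and an $L^\infty$ bound on the undifferentiated factor for the two boundary terms. The only substantive difference is in how you dispose of the factors $(n+1)^\theta(j-n+1)^\theta$ in the inner combinatorial sum. The paper simply bounds $(i+1)^\theta(j-i+1)^\theta\leq k^{2\theta}$ (using $j\leq k-1$), cancels this against the $k^{-2\theta}$ already present from $(k\rho)^{-j-2\theta}$, and is left with $\sum_{i=1}^{j-1}(i(j-i))^{-1/2}\leq\pi$; your route instead folds the $\theta$-powers into the summand and invokes a Beta-integral estimate $\sum_{n=1}^{j-1}(n(j-n))^{\theta-1/2}\leq C' j^{2\theta}$, afterwards absorbing $(j/k)^{2\theta}\leq 1$. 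Both work, but the paper's version is a touch simpler and is what yields the explicit constant $C_1$ in \eqref{eq:C1} exactly; your Beta-integral constant $C'$ depends on $\theta$ and would not literally reproduce the first summand of $C_1$ as you claim, only a finite constant playing the same role.
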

\begin{proof}
 Denote $\phi=\phi_\iota$ and $\psi = \phi_\kappa$. By Leibniz's rule and the Cauchy-Schwarz inequality,
 \begin{multline}
   \label{eq:leibniz1}
  \| r^{\frac{2}{3}(2-\gamma)+\alpham} \dalpha (\phi\psi) \|_{L^{3 p/2} (B_{R-k\rho})} 
   \\
   \begin{aligned}
&\leq  
\sum_{0< \beta < \alpha} \binom{\alpha}{\beta} \| r^{\frac{2-\gamma}{3}+\betam} \dbeta \phi \|_{L^{3 p} (B_{R-k\rho})} \| r^{\frac{2-\gamma}{3}+\alpham-\betam} \partial^{\alpha-\beta} \psi \|_{L^{3 p} (B_{R-k\rho})} 
\\
&\quad +  \| r^{\frac{2}{3}( 2-\gamma )+\alpham} \dalpha \phi \|_{L^{3 p/2} (B_{R-k\rho})} \|  \psi \|_{L^{\infty} (B_{R-k\rho})} \\
&\quad +  \| r^{\frac{2}{3}( 2-\gamma )+\alpham} \dalpha \psi \|_{L^{3 p/2} (B_{R-k\rho})} \|  \phi \|_{L^{\infty} (B_{R-k\rho})} 
   \end{aligned}
 \end{multline}
 Consider the sum over $0<\beta <\alpha$. By manipulation on the sums and using \eqref{eq:binom},
 \begin{align*}
&\sum_{\alpham=j}\sum_{0<\beta <\alpha} \binom{\alpha}{\beta} \| r^{\frac{2-\gamma}{3}+\betam} \dbeta \phi \|_{L^{3 p} (B_{R-k\rho})} \| r^{\frac{2-\gamma}{3}+\alpham-\betam} \partial^{\alpha-\beta} \psi \|_{L^{3 p} (B_{R-k\rho})} 
   \\ & \qquad
        = \sum_{i=1}^{j-1} \sum_{\betam=i}\sum_{\substack{\alpham=j\\ \alpha>\beta}} \binom{\alpha}{\beta}
\| r^{\frac{2-\gamma}{3}+\betam} \dbeta \phi \|_{L^{3 p} (B_{R-k\rho})} \| r^{\frac{2-\gamma}{3}+\alpham-\betam} \partial^{\alpha-\beta} \psi \|_{L^{3 p} (B_{R-k\rho})} 
   \\ & \qquad
        \leq \sum_{i=1}^{j-1}\binom{j}{i} \sum_{\betam=i}\sum_{\substack{\alpham=j\\ \alpha>\beta}} 
\| r^{\frac{2-\gamma}{3}+\betam} \dbeta \phi \|_{L^{3 p} (B_{R-k\rho})} \| r^{\frac{2-\gamma}{3}+\alpham-\betam} \partial^{\alpha-\beta} \psi \|_{L^{3 p} (B_{R-k\rho})} 
   \\ & \qquad
     = \sum_{i=1}^{j-1}\binom{j}{i} \sum_{\betam=i}\sum_{\xim = j-i} 
\| r^{\frac{2-\gamma}{3}+\betam} \dbeta \phi \|_{L^{3 p} (B_{R-k\rho})} \| r^{\frac{2-\gamma}{3}+\xim} \partial^{\xi} \psi \|_{L^{3 p} (B_{R-k\rho})} 
 \end{align*}
 Hence, using Lemma \ref{lemma:nonlin1} and Stirling's inequality on the last
 line above gives
 \begin{align*}
&\sum_{\alpham=j}\sum_{0<\beta <\alpha} \binom{\alpha}{\beta} \| r^{\frac{2-\gamma}{3}+\betam} \dbeta \phi \|_{L^{3 p} (B_{R-k\rho})} \| r^{\frac{2-\gamma}{3}+\alpham-\betam} \partial^{\alpha-\beta} \psi \|_{L^{3 p} (B_{R-k\rho})} 
\\
   &\qquad
     \leq (d+1)^2\Cinterp^2 e^{2\theta} \Cphi^2 \Aphi^{j+2\theta}(k\rho)^{-j-2\theta}\sum_{i=1}^{j-1}\binom{j}{i}  
     i^i (j-i)^{j-i}(i+1)^\theta  (j-i+1)^\theta
   \\
   &\qquad
     \leq (d+1)^2\Cinterp^2 e^{2\theta} \Cphi^2  \frac{1}{2\pi}\Aphi^{j+2\theta}(k\rho)^{-j-2\theta}e^j\sum_{i=1}^{j-1} \binom{j}{i} i!(j - i)!  (i+1)^\theta (j-i+1)^\theta\frac{1}{ \sqrt{i(j-i)}}
 \end{align*}
 Now, for any $i=1, \dots, j-1$ and since $j\leq k-1$, there holds $(i+1)^\theta
 (j-i+1)^\theta\leq k^{2\theta}$. In addition as already used in
 \cite{DallAcqua2012}, by comparing the Riemann sum with the integral,
 \begin{equation*}
   \sum_{i=1}^{j-1} \frac{1}{ \sqrt{i(j-i)}} \leq \int_0^j\frac{1}{ \sqrt{i(j-i)}}di = \pi,
 \end{equation*}
 hence
\begin{multline*}
      (d+1)^2\Cinterp^2 e^{2\theta} \Cphi^2  \frac{1}{2\pi}\Aphi^{j+2\theta}(k\rho)^{-j-2\theta}e^j\sum_{i=1}^{j-1} \binom{j}{i} i!(j - i)!  (i+1)^\theta (j-i+1)^\theta\frac{1}{ \sqrt{i(j-i)}}
      \\
      \leq \frac{(d+1)^2}{2}\Cinterp^2 e^{2\theta} \Cphi^2  \Aphi^{j+2\theta}\rho^{-j-2\theta}k^{-j}e^jj!.
\end{multline*}
Using again Stirling's inequality,
\begin{multline*}
\sum_{\alpham=j}\sum_{0<\beta <\alpha} \binom{\alpha}{\beta} \| r^{\frac{2-\gamma}{3}+\betam} \dbeta \phi \|_{L^{3 p} (B_{R-k\rho})} \| r^{\frac{2-\gamma}{3}+\alpham-\betam} \partial^{\alpha-\beta} \psi \|_{L^{3 p} (B_{R-k\rho})} 
\\ \leq
   \frac{(d+1)^2}{2}\Cinterp^2 e^{2\theta+1} \Cphi^2  \Aphi^{j+2\theta}\rho^{-j-2\theta}k^{-j}j^j\sqrt{j}.
\end{multline*}
The two remaining terms at the right hand side of \eqref{eq:leibniz1} are controlled
using Lemma \ref{lemma:nonlin1} and the
boundedness of the functions in $\Phi$. Indeed
\begin{align*}
  \| r^{\frac{2}{3}( 2-\gamma )+\alpham} \dalpha \phi \|_{L^{3 p/2} (B_{R-k\rho})}
  &\leq \|r^{\frac{2-\gamma}{3} }\|_{L^{3p}(B_{R-k\rho})} \|r^{\frac{2-\gamma}{3}+\alpham} \dalpha \phi \|_{L^{3 p} (B_{R-k\rho})}  
    \\
  &\leq (4\pi)^{1/3p}R^{\frac{2-(\gamma-d/p)}{3}}\|r^{\frac{2-\gamma}{3}+\alpham} \dalpha \phi \|_{L^{3 p} (B_{R-k\rho})}  
    \\
  &\leq (4\pi)^{1/3p}(d+1)\Cinterp e^\theta \Cphi \Aphi^{j+\theta} (k\rho)^{-j-\theta} j^j (j+1)^\theta
\end{align*}
where we have used Lemma \ref{lemma:nonlin1}, the fact that $\gamma-d/p<2$, and
$R\leq 1$. Then, since $\|  \psi \|_{L^{\infty} (B_{R-k\rho})}\leq \Cphi$ by
hypothesis and $p\geq 2d/3$
\begin{equation*}
   \| r^{\frac{2}{3}( 2-\gamma )+\alpham} \dalpha \phi \|_{L^{3 p/2} (B_{R-k\rho})} \|  \psi \|_{L^{\infty} (B_{R-k\rho})} 
  \leq (4\pi)^{1/2d}(d+1)\Cinterp e^\theta \Cphi^2 \Aphi^{j+\theta} (k\rho)^{-j-\theta} j^j (j+1)^\theta.
\end{equation*}
The same holds for the last term of \eqref{eq:leibniz1}, thus concluding the proof.
\end{proof}
\begin{lemma}[Bounds on norms of the potentials $u_{ab}$]
  \label{lemma:HF-u}
  Let $\Phi = \{\phi_1, \dots, \phi_N\}$ and let $u_{ab}$, $a,b = 1, \dots, N$ be
  the solution in $\mathbb{R}^d$, $d=2,3$, to 
  \begin{equation}
    \label{eq:laplaceHF}
    -\Delta u_{ab} = 4\pi \phi_a \phi_b.
  \end{equation}
Let also $p\geq 2d/3$, $0 <\gamma -
 d/p<\min(\epsilon, 2) $, and $\Cphi , \Aphi\geq 1$ such
 that
 \begin{equation}
   \label{eq:hypCA-lemma}
  \Cphi \geq \max_{a,b=1, \dots, N}\|u_{ab}\|_{L^\infty(\mathbb{R}^d)} , \qquad \Aphi \geq 4\pi \Cregq \rationum.
 \end{equation}
     There exists $\CCp>0$ independent of $\Aphi$ such that, for all $k\in
     \mathbb{N}$, $k\geq 2$, if
 $H_\Phi(p, \gamma, k, C_\Phi, \Aphi)$ holds, then
  \begin{equation}
    \label{eq:HF-u}
    \sum_{\alpham = j} \|r^{j-\tgamma} \partial^\alpha u_{ab} \|_{L^{3p/2}(B_{R-k\rho})}
    \leq \CCp \Aphi^{j+2\theta} \rho^{-j-2\theta} \left(\frac{j}{k}\right)^j 
  \end{equation}
  for all integers $1\leq j \leq k$ and all $\rho \in (0, R/(2k)]$, and where $\tgamma = \frac{2}{3}( \gamma - 2)$ and $\theta=\frac{2}{3}\frac{d}{p}$.
\end{lemma}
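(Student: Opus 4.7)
My plan is to exploit the equation $-\Delta u_{ab} = 4\pi \phi_a \phi_b$ directly: once derivatives of $\phi_a\phi_b$ are controlled by Lemma \ref{lemma:nonlin2}, a single application of weighted elliptic regularity for the Laplacian (with a two-order shift) upgrades these into bounds on derivatives of $u_{ab}$. The constants $4\pi$ and $\Cregq$ appearing along the way will be absorbed into the factor $\Aphi^2$ thanks to the hypothesis $\Aphi \geq 4\pi \Cregq \rationum$ in \eqref{eq:hypCA-lemma}, while $\rationum$ tracks the geometric shrinkage of nested balls in the weighted Calder\'on--Zygmund estimate.

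Concretely, for any multi-index $\beta$ with $|\beta|=j-2$, differentiating \eqref{eq:laplaceHF} gives
\begin{equation*}
-\Delta (\partial^\beta u_{ab}) = 4\pi \,\partial^\beta (\phi_a \phi_b).
\end{equation*}
I would then invoke the appropriate weighted $W^{2,3p/2}$-regularity estimate from the appendix (the one producing the constant $\Cregq$) on a pair of concentric balls $B_{R-k\rho} \subset B_{R-(k-2)\rho}$. Up to lower-order terms that are absorbed by the $L^\infty$ bound on $u_{ab}$ from \eqref{eq:hypCA-lemma}, this yields, for every $|\alpha|=j$,
\begin{equation*}
\|r^{j-\tgamma}\partial^\alpha u_{ab}\|_{L^{3p/2}(B_{R-k\rho})}
\leq \Cregq \bigl( 4\pi \,\|r^{j-\tgamma}\partial^\beta(\phi_a\phi_b)\|_{L^{3p/2}(B_{R-(k-2)\rho})} + \text{l.o.t.} \bigr),
\end{equation*}
using $r\leq R\leq 1$ to dominate $r^{j-\tgamma}$ by $r^{(j-2)-\tgamma} = r^{\frac{2}{3}(2-\gamma)+(j-2)}$, which is precisely the weight appearing in Lemma \ref{lemma:nonlin2}. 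Summing over $|\alpha|=j$ and plugging in the estimate \eqref{eq:nonlin2} at index $j-2$ produces
\begin{equation*}
\sum_{|\alpha|=j}\|r^{j-\tgamma}\partial^\alpha u_{ab}\|_{L^{3p/2}(B_{R-k\rho})}
\leq 4\pi\, \Cregq\, C_1\, \Aphi^{j-2+2\theta}\rho^{-(j-2)-2\theta}\Bigl(\tfrac{j-2}{k}\Bigr)^{j-2}(j-2)^{1/2} + \dots,
\end{equation*}
after replacing $k\rho$ by $(k-2)\rho$ and absorbing the resulting ratio $\bigl(k/(k-2)\bigr)^{j-2}$ (uniformly bounded for the values of $k,j$ in play). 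The remaining discrepancy in powers of $\Aphi$ and $\rho$ is exactly $\Aphi^2 \rho^{-2}$; together with $(j/k)^j \gtrsim ((j-2)/k)^{j-2}$ and the absorption $4\pi\Cregq \leq \Aphi/\rationum$, this yields the claimed right-hand side with an appropriate choice of $\CCp$.

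The cases $j=1$ and $j=2$ must be handled separately, directly from $-\Delta u_{ab} = 4\pi \phi_a\phi_b$ and the weighted $L^{3p/2}$--$W^{2,3p/2}$ estimate for $u_{ab}$ itself on $B_{R-k\rho} \subset B_{R-(k-2)\rho}$: here the right-hand side is $4\pi\|\phi_a\phi_b\|_{L^{3p/2}}$, which is controlled by the $L^\infty$-bound $\Cphi$ on $\Phi$, and the $L^\infty$-bound on $u_{ab}$ in \eqref{eq:hypCA-lemma} absorbs the interior lower-order term. I expect the main obstacle to be bookkeeping: verifying that the weighted regularity estimate in the appendix can be applied on the shifted balls so that one picks up exactly a factor of $\rationum$ per application (hence a single factor here, because only one application is made) and that the algebraic prefactors $(j/k)^j j^{1/2}$ on the right of \eqref{eq:nonlin2} combine with the extra $\rho^{-2}$ to reproduce the clean $(j/k)^j$ without parasitic $j$-dependent constants. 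Once that is checked, the choice $\Aphi \geq 4\pi \Cregq \rationum$ makes the induction close without any dependence of $\CCp$ on $\Aphi$.
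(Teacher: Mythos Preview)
Your proposed argument has a genuine gap in the treatment of the ``lower-order terms'' coming from the weighted elliptic estimate. The estimate in Proposition~\ref{lemma:elliptic-weighted} bounds $\sum_{|\alpha|=j}\|r^{j-\tgamma}\partial^\alpha u_{ab}\|_{L^{3p/2}}$ not only by derivatives of the source $\phi_a\phi_b$, but also by $\rho^{-1}\sum_{|\alpha|=j-1}\|r^{|\alpha|-\tgamma}\partial^\alpha u_{ab}\|_{L^{3p/2}}$ and $\rho^{-2}\sum_{|\alpha|=j-2}\|r^{|\alpha|-\tgamma}\partial^\alpha u_{ab}\|_{L^{3p/2}}$. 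These are not ``lower order'' in any useful sense: they involve derivatives of $u_{ab}$ of orders $j-1$ and $j-2$, which are \emph{not} controlled by the induction hypothesis $H_\Phi$ (that hypothesis concerns only $\phi_\iota$, not $u_{ab}$) and certainly cannot be absorbed by the $L^\infty$ bound $\|u_{ab}\|_{L^\infty}\leq\Cphi$. A single application of the elliptic estimate therefore does not close.

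What the paper does instead is iterate the elliptic estimate all the way down: setting $s_i$ equal to (a suitably rescaled version of) the $(j-i)$th-order weighted norm of $u_{ab}$ on a nested family of balls, the estimate reads $s_i \leq t_i + s_{i+1} + s_{i+2}$, where $t_i$ involves only derivatives of $\phi_a\phi_b$. Unfolding this two-term linear recursion produces Fibonacci coefficients $F_i$, and it is the bound $F_i \leq \rationum^i$ on Fibonacci numbers that explains the appearance of the golden ratio $\rationum$ in the hypothesis \eqref{eq:hypCA-lemma}. Your reading of $\rationum$ as ``a factor picked up per application'' is not what is happening; rather, it is the spectral radius of the recursion, and the condition $\Aphi \geq 4\pi\Cregq\rationum$ is what allows the accumulated factor $(4\pi\Cregq\rationum)^{i}$ from $i$ iterations to be absorbed into $\Aphi^{i}$. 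Only after iterating down to $s_{j-1}$ and $s_j$ do you reach quantities ($u_{ab}$ and $\nabla u_{ab}$) that can be bounded by $\Cphi$ via \eqref{eq:hypCA-lemma} and a single unweighted elliptic estimate.
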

\begin{proof}
  Suppose $j\geq 3$. We start by considering $j+1$ concentric balls 
  \begin{equation*}
    \tB_i= B_{R-k\frac{j-i}{j}\rho},\quad i = 0,\dots, j,
  \end{equation*}
  see Figure \ref{fig:uab-balls}.
  \begin{figure}
    \centering
    \includegraphics[width=.4\textwidth]{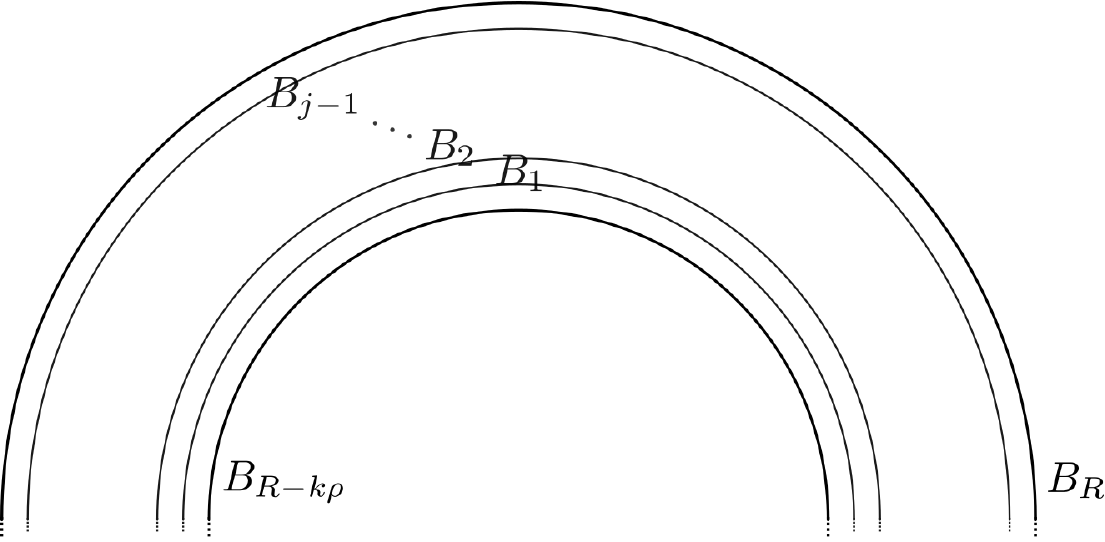}
    \caption{Concentric balls $B_i$.}
    \label{fig:uab-balls}
  \end{figure}
  Clearly, $B_{R-k\rho}  = \tB_{0} \subset \tB_{1} \subset \cdots \subset \tB_{j} = B_R$.
Now, for all $i=0, \dots, j-2$, using Proposition \ref{lemma:elliptic-weighted} in the Appendix (with $k$ replaced by $j-i-1$, $j$ replaced by $j-i-1$, $\rho$
    replaced by $\frac{k}{j}\rho$ and $\gamma$ replaced by $\tgamma$) and equation \eqref{eq:laplaceHF} we find
\begin{equation}
  \label{eq:reg-u-proof}
  \begin{aligned}
    &\sum_{\alpham = {j-i}}\| r^{-\tgamma+\alpham} \dalpha u_{ab}\|_{L^{3p/2}(\tB_i)}
      \leq \Cregq\left( 4\pi\sum_{\alpham=j-i-2}\|r^{-\tgamma + 2 +\alpham}\dalpha(\phi_a \phi_b)\|_{L^{3p/2}(\tB_{i+1})}
    \right. \\
&\qquad \left. +  \left(\rho\frac{k}{j}\right)^{-1}\sum_{\alpham=j-i-1}\|r^{-\tgamma+\alpham}\dalpha u_{ab}\|_{L^{3p/2}(\tB_{i+1})}
+  \left(\rho\frac{k}{j}\right)^{-2}\sum_{\alpham=j-i-2}\|r^{-\tgamma  +\alpham}\dalpha u_{ab}\|_{L^{3p/2}(\tB_i+1)}\right).
  \end{aligned}
\end{equation}
  We also write
  \begin{equation*}
    s_i = \Cregq^i\left( \frac{k}{j}\rho \right)^{-i}\sum_{\alpham=j-i}  \|r^{-\tgamma + \alpham} \dalpha u_{ab} \|_{L^{3p/2}(\tB_i)}\qquad i=0, \dots, j
  \end{equation*}
  and
  \begin{equation*}
    t_i = (4\pi\Cregq)^{i+1}\left( \frac{k}{j}\rho \right)^{-i}\sum_{\alpham=j-i-2}  \|r^{-\tgamma + 2+\alpham} \dalpha (\phi_a\phi_b) \|_{L^{3p/2}(\tB_{i+1})}\qquad i=0, \dots, j-2.
  \end{equation*}
  Then, since $\Cregq\geq1$ and $\tB_{i+1}\subset \tB_{i+2}$, equation \eqref{eq:reg-u-proof} implies
  \begin{equation*}
    s_i \leq t_i + s_{i+1} + s_{i+2}.
  \end{equation*}
  Let now $F_i$ denote the $i$th Fibonacci number (with $F_0 = F_1=1$). Iterating on the above, one obtains
  \begin{equation*}
    s_0 \leq \sum_{i=0}^{j-2} F_i t_i + F_{j-1} s_{j-1} + F_{j-2} s_j.
  \end{equation*}
  Denoting $\ratio = \rationum$ and remarking that $ F_i \leq \ratio^i$,
  \begin{equation}
    \label{eq:iterate-uab}
    \begin{aligned}
      \sum_{\alpham = j}\| r^{-\tgamma+\alpham} \dalpha u_{ab}\|_{L^{3p/2}(\tB_0)} 
      &\leq
 \sum_{i=0}^{j-2}  ( 4\pi \Cregq\ratio)^{i+1}\left(  \frac{k}{j}\rho\right)^{-i}\sum_{\alpham = j-i-2}\|r^{-\tgamma+2 +\alpham}\dalpha(\phi_a \phi_b)\|_{L^{3p/2}(\tB_{i+1})} \\
&\qquad  + \sum_{\alpham=0,1}(\Cregq\ratio)^{j-\alpham} \left(\frac{k}{j}\rho\right)^{-j+\alpham}\|r^{-\tgamma+\alpham}\dalpha u_{ab}\|_{L^{3p/2}(\tB_{j-\alpham})}.
    \end{aligned}
  \end{equation}
   We consider the first term at the right hand side of the above equation:
  $\phi_a$ and $\phi_b$ satisfy the hypotheses of Lemma \ref{lemma:nonlin2}
  (with $\rhotilde = \frac{j-i-1}{j}\rho$), thus, when $0\leq i\leq j-3$
  \begin{multline*}
\sum_{\alpham = j-i-2}\|r^{-\tgamma+2 +\alpham}\dalpha(\phi_a \phi_b)\|_{L^{3p/2}(\tB_{i+1})}\\
\begin{aligned}
&\leq \|r^2\|_{L^\infty(B_R)} \sum_{\betam = j-i-2}\|r ^{\betam - \tgamma}\dbeta(\phi_a \phi_b)\|_{L^{3p/2}(\tB_{i+1})} \\
&\leq 4\pi C_1  \Aphi^{j-i-2+2\theta} k^{-j+i+2}\left( \frac{j-i-1}{j}\rho\right)^{-j+i+2-2\theta} ({j-i-2})^{j-i-2} (j-i-2)^{1/2}.
\end{aligned}
  \end{multline*}
  When $i=j-2$ in the sum above, instead, we have the term
  \begin{equation}
      \label{eq:ij-2}
      \begin{aligned}
    \|r^{-\tgamma+2}\phi_a \phi_b\|_{L^{3p/2}(\tB_{i+1})} &\leq \|\phi_a\phi_b\|_{L^\infty(B_R)} \|r^{-\tgamma+2}\|_{L^{3p/2}(\tB_{i+1})}\leq R^{-\tgamma+2+\theta}4\pi \Cphi^2\leq R^24\pi\Cphi^2\\ &\leq 4\pi\Cphi^2,
      \end{aligned}
  \end{equation}
  where we have used that $\tgamma \leq  \theta$ and $R<1$.
  Hence, since $\Aphi \geq 4\pi\Cregq\ratio$ and indicating by $\zeta(\cdot)$ the
  Riemann zeta function,
  \begin{multline}
    \label{eq:sum-lemma}
    \sum_{i=0}^{j-3} (4\pi\Cregq\ratio)^{i+1}\left( \frac{k}{j}\rho\right)^{-i}\sum_{\alpham =
      j-i-2}\|r^{-\tgamma+2 +\alpham}\dalpha(\phi_a \phi_b)\|_{L^{3p/2}(\tB_{i+1})} \\
    \begin{aligned}
    &\leq
 4\pi C_1 \Aphi^{j-1+2\theta} \rho^{-j+2-2\theta} k^{-j+2}j^{j} \sum_{i=0}^{j-3}  j^{2\theta-2}
  \left(\frac{j-i-2}{j-i-1}\right)^{j-i-2}(j-i-1)^{-2\theta} (j-i-2)^{1/2}\\
    &\leq
 4\pi C_1 \Aphi^{j-1+2\theta} \rho^{-j+2-2\theta} k^{-j+2}j^{j} \sum_{i=0}^{j-3} 
  \left(\frac{j-i-2}{j-i-1}\right)^{j-i}\left( \frac{j-i-1}{j} \right)^{2-2\theta} (j-i-2)^{-3/2}\\
    &\leq \pi C_1\zeta(3/2)
 \Aphi^{j+2\theta} \rho^{-j-2\theta} k^{-j}j^{j}, 
    \end{aligned}
  \end{multline}
  where we have also used the facts that $k\rho\leq \frac{1}{2}$, and $\theta\leq 1$.

  We still need to bound the second term at the right hand side of
  \eqref{eq:iterate-uab}. There holds
  \begin{equation}
    \label{eq:uab0}
    \|r^{-\tgamma} u_{ab}\|_{L^{3p/2}(\tB_j)} \leq\|r^{-\tgamma} \|_{L^{3p/2}(B_R)} \| u_{ab}\|_{L^\infty(B_R)} \leq 4\pi \Cphi,
  \end{equation}
  by hypothesis \eqref{eq:hypCA-lemma}. Furthermore, note that by the hypotheses
  on $\gamma$ and $p$, we have $1-\tgamma\geq 0$. By classical elliptic regularity
  in Sobolev spaces \cite[Corollary D.4]{DallAcqua2012},
  there exists a constant $\CSp$ dependent only on $p$ such that
  \begin{equation}
    \label{eq:Cs}
    \begin{aligned}
      \sum_{\alpha\leq 2} \|\dalpha u_{ab}\|_{L^{3p/2}(\tB_{j-1})}
      &\leq \CSq\left( \|\phi_a\phi_b\|_{L^{3p/2}(B_{R+1})} + \| u_{ab}\|_{L^{3p/2}(B_{R+1})} \right) 
      \\
      &\leq |B_{R+1}|^{2/(3p)}\CSq\left( \|\phi_a\phi_b\|_{L^\infty(\mathbb{R}^d)} + \| u_{ab}\|_{L^\infty(\mathbb{R}^d)} \right)
      \\
      &\leq (4\pi)^{2/(3p)} (R+1)^\theta\CSq\left( \|\phi_a\phi_b\|_{L^\infty(\mathbb{R}^d)} + \| u_{ab}\|_{L^\infty(\mathbb{R}^d)} \right)
      \\ & \leq 16\pi\CSq\Cphi^2
    \end{aligned}
  \end{equation}
where we have also used $2/(3p)\leq 1$, $R\leq 1$, and \eqref{eq:hypCA-lemma}. Hence, 
  \begin{equation}
    \label{eq:uab1}
      \sum_{\alpham=1}\|r^{1-\tgamma} \dalpha u_{ab}\|_{L^{3p/2}(\tB_{j-1})}
      \leq\|r^{1-\tgamma} \|_{L^{\infty}(B_R)} \sum_{\alpham=1}\|\dalpha u_{ab}\|_{L^{3p/2}(\tB_{j-1})} 
      \leq 16\pi\CSq\Cphi^2.
  \end{equation}
  Now, combining \eqref{eq:iterate-uab}, \eqref{eq:ij-2}, \eqref{eq:sum-lemma}, \eqref{eq:uab0},
  and \eqref{eq:uab1}, we obtain
  \begin{multline*}
      \sum_{\alpham = j}\| r^{-\tgamma+\alpham} \dalpha u_{ab}\|_{L^{3p/2}(B_{R-k\rho})} 
      \\
    \leq (4\pi\Cphi^2+\pi C_1\zeta(3/2) + 16\pi\CSq\Cphi^2 + 4\pi\Cphi)
 \Aphi^{j+2\theta} \rho^{-j-2\theta} k^{-j}j^{j}
  \end{multline*}
  when $j\geq 3$. The cases $j=0,1,2$ are easily treated using \eqref{eq:uab0} and \eqref{eq:Cs}.
  \end{proof}

\begin{lemma}[Bounds on products of eigenfunctions and electronic potentials]
  \label{lemma:nonlin3}
  Let $\Phi = \{\phi_1, \dots, \phi_N\}$ and let $u_{ab}$, $a,b = 1, \dots, N$ be
  solution to \eqref{eq:laplaceHF}.
Let furthermore $p\geq 2d/3$, $0 <\gamma - d/p<\min(\epsilon, 2) $, and $\Cphi , \Aphi\geq 1$ such
 that
 \begin{equation}
   \label{eq:hypCA-lemma-2}
  \Cphi \geq \max_{a,b=1, \dots, N}\|u_{ab}\|_{L^\infty(\mathbb{R}^d)} , \qquad \Aphi \geq 4\pi \Cregq \rationum.
 \end{equation}
There exists $\CCCp$ independent of $\Aphi$ such that, for all $k\in \mathbb{N}$, if
 $H_\Phi(p, \gamma, k, C_\Phi, \Aphi)$ holds , then
 \begin{equation}
   \label{eq:nonlin3}
 \sum_{\alpham=j}  \| r^{2-\gamma+j} \dalpha (u_{ab}\phi_\iota) \|_{L^{p} (B_{R-k\rho})} 
\leq \CCCp A_{\Phi}^{j+3\theta} \rho^{-j-3\theta} \left(\frac{j}{k}\right)^j j,\qquad a,b,\iota=1, \dots, N,
 \end{equation}
  for all integer $1\leq j \leq k$, all $\rho\in (0, R/(2k)]$, and where $\theta = \frac{2}{3}\frac{d}{p}$.
\end{lemma}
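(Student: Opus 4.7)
The plan is to follow the strategy of the proof of Lemma \ref{lemma:nonlin2}. I would expand $\dalpha(u_{ab}\phi_\iota)$ by Leibniz's rule as a sum over $\beta \leq \alpha$ of $\binom{\alpha}{\beta}\, \partial^{\alpha-\beta} u_{ab}\, \dbeta \phi_\iota$, and then separate three contributions: the interior terms ($0 < \beta < \alpha$) and the two endpoints ($\beta = 0$ and $\beta = \alpha$).

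For the interior terms, I would apply Hölder's inequality with exponents $3p/2$ and $3p$, which reproduces $L^p$ since $\tfrac{2}{3p} + \tfrac{1}{3p} = \tfrac{1}{p}$. With $\tgamma = \tfrac{2}{3}(\gamma-2)$, the weights split cleanly: the factor $r^{|\alpha-\beta|-\tgamma}$ from Lemma \ref{lemma:HF-u} and the factor $r^{(2-\gamma)/3 + \betam}$ from Lemma \ref{lemma:nonlin1} combine to give exactly $r^{2-\gamma+\alpham}$, matching the weight on the left-hand side of \eqref{eq:nonlin3}. The standard rearrangement
\[
\sum_{\alpham = j}\sum_{0<\beta<\alpha}\binom{\alpha}{\beta} \;=\; \sum_{i=1}^{j-1}\binom{j}{i}\sum_{\xim=j-i}\sum_{\betam=i},
\]
followed by insertion of the estimates from Lemmas \ref{lemma:nonlin1} and \ref{lemma:HF-u}, yields a convolution of the shape $\sum_{i=1}^{j-1}\binom{j}{i}\, i^i (j-i)^{j-i} (i+1)^\theta$, which (via Stirling's formula together with the Riemann-sum estimate $\sum_{i=1}^{j-1} 1/\sqrt{i(j-i)}\leq \pi$ already used in Lemma \ref{lemma:nonlin2}) collapses to $j^j$ up to constants and a benign factor of $k^\theta$. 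The net interior contribution is then of order $\Aphi^{j+3\theta}\rho^{-j-3\theta}(j/k)^j$, as required.

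For the endpoint $\beta = 0$, I would bound $\phi_\iota$ in $L^\infty$ by $\Cphi$, split the weight as $r^{2-\gamma+j} = r^{(2-\gamma)/3}\cdot r^{j-\tgamma}$, and apply Hölder with exponents $(3p, 3p/2)$: the first factor is then controlled by $\|r^{(2-\gamma)/3}\|_{L^{3p}(B_R)}$, which is finite thanks to $\gamma - d/p < 2$, and the second by Lemma \ref{lemma:HF-u}. The endpoint $\beta = \alpha$ is symmetric, using the $L^\infty$ bound on $u_{ab}$ from \eqref{eq:hypCA-lemma-2} together with Lemma \ref{lemma:nonlin1}, after splitting $r^{2-\gamma+j} = r^{2(2-\gamma)/3}\cdot r^{(2-\gamma)/3 + j}$.

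The main obstacle I expect is the bookkeeping: reconciling the $\Aphi^\theta$ factor from Lemma \ref{lemma:nonlin1} with the $\Aphi^{2\theta}$ factor from Lemma \ref{lemma:HF-u} to produce the advertised $\Aphi^{j+3\theta}\rho^{-j-3\theta}$ scaling, and tracking the various powers of $(j+1)^\theta$ to ensure that they collectively contribute at most the linear factor $j$ stated on the right-hand side of \eqref{eq:nonlin3} (rather than the $\sqrt{j}$ of Lemma \ref{lemma:nonlin2}, the extra half-power coming from the boundary terms where either $u_{ab}$ or $\phi_\iota$ is differentiated $j$ times). Once the interior-term estimate is in place, the small-$j$ cases reduce directly to $L^\infty$ bounds on $u_{ab}$ and $\phi_\iota$ combined with finite integrals of the weight, analogously to \eqref{eq:uab0}--\eqref{eq:Cs}.
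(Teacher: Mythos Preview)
Your proposal is correct and follows essentially the same route as the paper's proof: Leibniz expansion, H\"older with exponents $(3p,3p/2)$ and the weight split $r^{2-\gamma+\alpham}=r^{(2-\gamma)/3+\betam}\cdot r^{\alpham-\betam-\tgamma}$, then Lemmas~\ref{lemma:nonlin1} and~\ref{lemma:HF-u} on the interior terms (combined via Stirling and the Riemann-sum bound), and the two endpoints handled exactly as you describe. The only superfluous remark is your final sentence about small-$j$ cases---the paper does not separate them here (the endpoint and interior estimates already cover all $1\leq j\leq k$), so you can drop that.
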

\begin{proof}
Denote $u = u_{ab}$, $\phi = \phi_j$. We have  
 \begin{multline}
   \label{eq:nonlin3-1}
 \sum_{\alpham=j}  \| r^{{2-\gamma}+\alpham} \dalpha (u\phi) \|_{L^{p} (B_{R-k\rho})} 
   \\ \leq  \sum_{\alpham=j}
\sum_{\beta \leq \alpha} \binom{\alpha}{\beta} \| r^{\frac{2-\gamma}{3}+\betam} \dbeta \phi \|_{L^{3 p} (B_{R-k\rho})} \| r^{\frac{2}{3}(2-\gamma)+\alpham-\betam} \partial^{\alpha-\beta} u \|_{L^{3p/2} (B_{R-k\rho})} .
 \end{multline}
 Using \eqref{eq:nonlin2} we follow the same procedure as in the proof of Lemma
 \ref{lemma:nonlin2}. When $0<\beta<\alpha$ in the sum above, using Lemmas
 \ref{lemma:nonlin1} and \ref{lemma:HF-u},
\begin{align*}
&\sum_{\alpham=j}\sum_{0<\beta <\alpha} \binom{\alpha}{\beta} \| r^{\frac{2-\gamma}{3}+\betam} \dbeta \phi \|_{L^{3 p} (B_{R-k\rho})} \| r^{\frac{2}{3}( 2-\gamma )+\alpham-\betam} \partial^{\alpha-\beta} u \|_{L^{3 p/2} (B_{R-k\rho})}  
      \\ & \qquad
     \leq \sum_{i=1}^{j-1}\binom{j}{i} \sum_{\betam=i}\sum_{\xim = j-i} 
\| r^{\frac{2-\gamma}{3}+\betam} \dbeta \phi \|_{L^{3 p} (B_{R-k\rho})} \| r^{\frac{2}{3}(2-\gamma)+\xim} \partial^{\xi} u \|_{L^{3 p} (B_{R-k\rho})} \\
   &\qquad
     \leq (d+1)\Cinterp e^{\theta} \Cphi \CCp\Aphi^{j+3\theta}\rho^{-j-3\theta} k^{-j-\theta}\sum_{i=1}^{j-1}\binom{j}{i}  
     i^i (j-i)^{j-i}(i+1)^\theta 
   \\
   &\qquad
     \leq (d+1)\Cinterp e^{\theta} \Cphi \CCp\frac{1}{2\pi}\Aphi^{j+3\theta}\rho^{-j-3\theta} k^{-j-\theta}e^j\sum_{i=1}^{j-1} \binom{j}{i} i!(j - i)!  (i+1)^\theta \frac{1}{ \sqrt{i(j-i)}}.
   \\
   &\qquad
     \leq \frac{d+1}{2}\Cinterp e^{\theta} \Cphi \CCp\Aphi^{j+3\theta}\rho^{-j-3\theta} k^{-j}e^jj!.
   \\
   &\qquad
     \leq \frac{d+1}{2}\Cinterp e^{\theta+1} \Cphi \CCp\Aphi^{j+3\theta}\rho^{-j-3\theta} k^{-j}j^{j+1/2},
 \end{align*}
 where the last inequalities stem from the same arguments as in the proof of
 Lemma \ref{lemma:nonlin2}.
 The terms in the sum in \eqref{eq:nonlin3-1} where $\beta = 0$ and
 $\beta=\alpha$ give a similar bound: firstly, by $H_\Phi$ and Lemma \ref{lemma:HF-u}
 \begin{multline*}
   \sum_{\alpham=j}\| r^{\frac{2-\gamma}{3}}  \phi \|_{L^{3 p} (B_{R-k\rho})} \| r^{\frac{2}{3}(2-\gamma)+\alpham} \partial^{\alpha} u \|_{L^{3p/2} (B_{R-k\rho})}\\
   \begin{aligned}
  & \leq\| r^{\frac{2-\gamma}{3}}  \|_{L^{3 p} (B_{R})}\|\phi \|_{L^{\infty} (B_{R-k\rho})} \sum_{\alpham=j}\| r^{\frac{2}{3}(2-\gamma)+\alpham} \partial^{\alpha} u \|_{L^{3p/2} (B_{R-k\rho})} 
    \\ & \leq 4\pi \Cphi \CCp\Aphi^{j+2\theta} \rho^{-j-2\theta} j^{j} k^{-j}.
   \end{aligned}
 \end{multline*}
 In addition, by Lemma \ref{lemma:nonlin1} and since $\Cphi \geq \| u\|_{L^\infty(\mathbb{R}^d)}$
\begin{multline*}
   \sum_{\alpham=j}\| r^{\frac{2-\gamma}{3}+\alpham}  \dalpha \phi \|_{L^{3 p} (B_{R-k\rho})} \| r^{\frac{2}{3}(2-\gamma)}  u \|_{L^{3p/2} (B_{R-k\rho})}\\
   \begin{aligned}
     &
     \leq
(d+1)\Cinterp e^\theta \Cphi \Aphi^{j+\theta} (k\rho)^{-j-\theta} j^j (j+1)^\theta
     \| r^{\frac{2}{3}(2-\gamma)}\|_{L^{3p/2} (B_{R-k\rho})}\| u \|_{L^{\infty} (B_{R-k\rho})} 
    \\ &\leq  
(d+1)4\pi \Cinterp \Cphi e^\theta \Cphi \Aphi^{j+\theta} \rho^{-j-\theta} k^{-j}j^j
   \end{aligned}
 \end{multline*}
and choosing
\begin{equation*}
  \CCCp = \frac{d+1}{2}\Cinterp e^{\theta+1} \Cphi \CCp+4\pi \Cphi \CCp + (d+1)4\pi \Cinterp \Cphi e^\theta \Cphi 
\end{equation*}
concludes the proof.
\end{proof}

\begin{lemma}[Bounds on products of singular potential and eigenfunction]
  \label{lemma:Vu}
 Let $\Phi = \{\phi_1, \dots, \phi_N\}$ and let $V:\mathbb{R}^d\to \mathbb{R}$
 such that \eqref{eq:V-hyp} holds. Let then
 $p\geq 2d/3$, $0 <\gamma - d/p<\min(\epsilon, 2) $, and $\Cphi , \Aphi\geq 1$ such that
 \begin{equation}
   \label{eq:hypCA-lemma-Vu}
   \Aphi \geq A_V
 \end{equation}
 For all $k\in \mathbb{N}$, if
 $H_\Phi(p, \gamma, k, \Cphi, \Aphi)$ holds, then
 \begin{equation}
   \label{eq:Vu}
 \sum_{\alpham=k-1}  \| r^{2-\gamma+\alpham} \dalpha (V\phi_\iota) \|_{L^{p} (B_{R-k\rho})} 
\leq  C_4 \Aphi^{k-1} \rho^{-k+1} k^{-k+1} (k-1)^{k} ,\qquad \iota=1, \dots, N,
 \end{equation}
  for all $\rho\in (0, R/(2k)]$, with $C_4= \left(\frac{1}{2\sqrt{2\pi}}e  + 4\pi e+1\right)C_V\Cphi$.
\end{lemma}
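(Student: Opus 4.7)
The plan is to apply Leibniz's rule to $\dalpha(V\phi_\iota)$ for each $|\alpha|=k-1$, split the resulting sum according to whether the multi-index $\beta$ landing on $V$ equals $0$, equals $\alpha$, or lies strictly between, and estimate each block separately using \eqref{eq:V-hyp} together with the induction assumption $H_\Phi$. The weight manipulation that makes these estimates line up is the pointwise factorisation
\[
r^{2-\gamma+|\alpha|} = r^{\epsilon}\cdot r^{2-\epsilon+|\beta|}\cdot r^{|\alpha-\beta|-\gamma} \;\leq\; r^{2-\epsilon+|\beta|}\cdot r^{|\alpha-\beta|-\gamma},
\]
valid in $B_R$ because $r\leq R\leq 1$ and $\epsilon>0$; each product $\dbeta V\cdot\partial^{\alpha-\beta}\phi_\iota$ is then controlled by an $L^\infty$–$L^p$ Hölder bound, with the $L^\infty$ factor provided by $\|r^{2-\epsilon+|\beta|}\dbeta V\|_{L^\infty}\leq C_V A_V^{|\beta|}|\beta|!$ and $A_V\leq\Aphi$.

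For $0\leq|\beta|\leq k-2$ the induction assumption $H_\Phi(p,\gamma,k,\Cphi,\Aphi)$ with $j=k-1-|\beta|\geq 1$ directly controls $\|r^{|\alpha-\beta|-\gamma}\partial^{\alpha-\beta}\phi_\iota\|_{L^p}$. Using $\binom{\alpha}{\beta}\leq\binom{|\alpha|}{|\beta|}$ (as in the earlier lemmas), changing variables to $\xi=\alpha-\beta$ to decouple the multi-index sums, and collecting powers yields a convolutional sum of the form
\[
C_V\Cphi\Aphi^{k-1}\sum_{i=0}^{k-2}\binom{k-1}{i}\,i!\,(k-1-i)^{k-1-i}(k\rho)^{-(k-1-i)},
\]
up to a polynomial factor from the count of multi-indices of fixed length. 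The factorisation $(k\rho)^{-(k-1-i)}=k^{-(k-1)}\rho^{-(k-1)}(k\rho)^{i}$, the dyadic bound $k\rho\leq 1/2$ that follows from $\rho\leq R/(2k)$ and $R\leq 1$, and Stirling's inequality $j^j\leq j!\,e^j/\sqrt{2\pi j}$ together turn the convolution into a geometric series and produce the prescribed growth $\rho^{-k+1}k^{-k+1}(k-1)^k$ with a multiplicative constant of order $\tfrac{1}{2\sqrt{2\pi}}e$; the $\beta=0$ term, treated separately, contributes the additive $1$.

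The remaining case $\beta=\alpha$ requires a different argument, since $H_\Phi$ does not cover the zero-order derivative of $\phi_\iota$. I would instead factorise $r^{2-\gamma+|\alpha|}=r^{\epsilon-\gamma}\cdot r^{2-\epsilon+|\alpha|}$ and apply Hölder's inequality in the form $L^p\cdot L^\infty\cdot L^\infty$, using $\|r^{\epsilon-\gamma}\|_{L^p(B_R)}<\infty$ — this is precisely where the hypothesis $\gamma-d/p<\epsilon$ is essential, since it ensures $(\gamma-\epsilon)p<d$ and hence integrability of $r^{\epsilon-\gamma}$ near $\fc$. Combined with \eqref{eq:V-hyp} and $\|\phi_\iota\|_{L^\infty}\leq\Cphi$, and after summing over the $\binom{k+d-2}{d-1}$ multi-indices with $|\alpha|=k-1$, the resulting polynomial growth in $k$ is absorbed into $(k-1)^k$ via a second application of Stirling, producing the $4\pi e$ contribution to $C_4$ (with $4\pi$ arising from the surface measure of the unit sphere entering $\|r^{\epsilon-\gamma}\|_{L^p}$). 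I expect no serious conceptual difficulty in this proof; the main obstacle is the bookkeeping needed to combine Stirling's inequality, the multi-index count $\binom{i+d-1}{d-1}$, and the dyadic smallness $k\rho\leq 1/2$ so that every contribution fits into the single closed form $C_4\Aphi^{k-1}\rho^{-k+1}k^{-k+1}(k-1)^k$ with the three additive constants in $C_4$ assembled exactly as stated.
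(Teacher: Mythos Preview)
Your proposal is correct and follows essentially the same route as the paper: Leibniz expansion, the weight split $r^{2-\gamma+|\alpha|}=r^{2-\epsilon+|\beta|}\cdot r^{\epsilon-\gamma+|\alpha-\beta|}$ (your version carries an explicit $r^\epsilon\leq 1$ factor, which is equivalent), the induction hypothesis for $1\leq|\alpha-\beta|\leq k-1$, Stirling plus the geometric bound from $k\rho\leq 1/2$ for the inner sum, and the integrability of $r^{\epsilon-\gamma}$ in $L^p$ for the $\beta=\alpha$ endpoint. The only notable difference is that you keep track of the multi-index count $\binom{k+d-2}{d-1}$ in the $\beta=\alpha$ block, whereas the paper tacitly absorbs it; this is harmless, since that polynomial factor is dominated by $(k-1)^{k}$ after Stirling.
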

\begin{proof}
  There holds
\begin{multline}
  \label{eq:Vu-proof-1}
  \sum_{\alpham = k-1}\| r^{2-\gamma+\alpham}\dalpha (V\phi_\iota) \|_{L^p(B_{R-k\rho})} \\ 
  \begin{aligned}
&\leq \sum_{\alpham = k-1}\sum_{0 <\beta < \alpha} \binom{\alpha}{\beta} \| r^{2-\epsilon+\betam} \dbeta V \|_{L^{\infty} (B_{R-k\rho})} \| r^{\epsilon-\gamma+\alpham-\betam} \partial^{\alpha-\beta} \phi_\iota \|_{L^{p} (B_{R-k\rho})} \\
&\quad+ \| r^{2-\epsilon} V \|_{L^{\infty} (B_{R-k\rho})}  \sum_{\alpham = k-1}\| r^{\epsilon-\gamma+\alpham} \dalpha \phi_\iota \|_{L^{p} (B_{R-k\rho})} \\
&\quad+\sum_{\alpham = k-1}\| r^{2-\epsilon+\alpham} \dalpha V \|_{L^{\infty} (B_{R-k\rho})} \| r^{\epsilon-\gamma}  \phi_i \|_{L^{p} (B_{R-k\rho})} 
  \end{aligned}
\end{multline}
By the usual manipulations,
\begin{multline*}
\sum_{\alpham = k-1}\sum_{0 <\beta < \alpha} \binom{\alpha}{\beta} \| r^{2-\epsilon+\betam} \dbeta V \|_{L^{\infty} (B_{R-k\rho})} \| r^{\epsilon-\gamma+\alpham-\betam} \partial^{\alpha-\beta} \phi_\iota \|_{L^{p} (B_{R-k\rho})} \\
\begin{aligned}
&\leq C_V \Cphi \sum_{j=1}^{k-2} \binom{k-1}{j} A_V^{j} \Aphi^{k-1-j} j!(k-1-j)^{k-1-j}(k\rho)^{-k+1+j}\\
&\leq \frac{1}{\sqrt{2\pi}}C_V \Cphi \Aphi^{k-1}(k-1)!e^{k-1}\sum_{j=1}^{k-2}(k-1-j)^{-1/2}(k\rho)^{-k+1+j}\\
&\leq \frac{1}{\sqrt{2\pi}}C_V \Cphi \Aphi^{k-1}(k-1)!e^{k-1}(k\rho)^{-k+2}\\
&\leq \frac{1}{2\sqrt{2\pi}}e C_V \Cphi \Aphi^{k-1}(k-1)^{k-1/2}(k\rho)^{-k+1}
\end{aligned}
\end{multline*}
The bound on the second to last term in \eqref{eq:Vu-proof-1} is straightforward, while
for the last term we note that $\epsilon-\gamma> -d/p$ thus
$\|r^{\epsilon-\gamma}
\phi_\iota\|_{L^p(B_R)} \leq 4\pi \Cphi$ and
\begin{align*}
  \sum_{\alpham = k-1}\| r^{2-\epsilon+\alpham} \dalpha V \|_{L^{\infty} (B_{R-k\rho})} \| r^{\epsilon-\gamma}  \phi_i \|_{L^{p} (B_{R-k\rho})} 
  &\leq 4\pi C_\phi C_VA_V^{k-1} (k-1)!\\
  &\leq 4\pi C_\phi C_V e A_V^{k-1} (k-1)^{k-1/2} e^{-k+1}.
\end{align*}
Therefore,
\begin{equation*}
  \sum_{\alpham = k-1}\| r^{2-\gamma+\alpham}\dalpha (V\phi_\iota) \|_{L^p(B_{R-k\rho})} \
  \leq \left(\frac{1}{2\sqrt{2\pi}}e  + 4\pi e+1\right)C_V\Cphi \Aphi^{k-1}(k-1)^k (k\rho)^{-k+1}
\end{equation*}
and this concludes the proof.
\end{proof}
  \begin{proof}[Proof of Theorem \ref{theorem:HFreg}]
     First, we remark that $\phi_a, \phi_b \in H^1(\mathbb{R}^d)$ implies
   $u_{ab}\in W^{2,3}(B_R)$ via the second equation of \eqref{eq:HF}.
   Due to \eqref{eq:V-hyp}, there exists $q> d/2$ such that $V\in L^q(B_R)$,
   and, by classical elliptic regularity arguments
   \cite{Stampacchia1965}, $\Phi\in \left( L^\infty(B_R) \right)^N$.
   Hence, for all $a,b\in \{1, \dots, N\}$, $\phi_a\phi_b \in H^1(B_R)\cap
     L^\infty(B_R)$. Therefore, by \eqref{eq:HF} again, $u_{ab}\in
     H^3(B_R)\subset W^{1, \infty}(B_R)$ for all $1\leq p<\infty$. This implies
     that $\Phi \in \left(\cJ^2_\xi(B_R)  \right)^N$, for all $\xi-d/2<\epsilon$. We can
     conclude that, for all $a, b\in \{1, \dots, N\}$ and all $\iota\in \{1,
     \dots, \infty\}$, there holds $u_{ab}\phi_\iota \in \cJ^2_{\gamma}(B_R)$,
     which in turn implies $\Phi \in \left( \cJ^4_\xi \right)^N$, for all
     $\xi -d/2 < \epsilon$. This implies furthermore, by \cite[Lemma 3.1]{Maday2019a}, 
   \[\sum_{\alpham = 2}\| r^{2-\gamma}\dalpha\phi_\iota\|_{L^p(B_R)} < \infty\] for all $p>1$, $\gamma < d/p+\epsilon$, and $\iota=1,\dots,N$.
Hence, for all
$1<p<\infty$ and $\gamma - d/p < \epsilon$, there exist $C, A>0$ (dependent on
$p$ and $\gamma$) such that $H_{\Phi}(p, \gamma, 2, C, A)$ holds.

\textbf{Induction step.}
We proceed by induction and impose a restriction on $p$; specifically, we fix a
finite $\phat$ such that
\begin{equation}
  \label{eq:p-cond}
\phat \geq 2d .
\end{equation}
We denote the corresponding $\thetahat = \frac{2}{3}\frac{d}{\phat}$.
Let us now also fix $\ghat\in \mathbb{R}$ such that $0 < \ghat -d/\phat<\min(\epsilon, 2)$. Let then $\Cphi, \Aphi\geq 1$ such that $H_{\Phi}(\phat, \ghat, 2,
\Cphi, \Aphi)$ holds, and that
\begin{equation}
  \label{eq:CA-proof}
\begin{aligned}
  &\Cphi \geq \max_{a, b =1,\dots, N} \| u_{ab} \|_{L^\infty(\mathbb{R}^d)} \\
  &\Aphi \geq \max\left( A_V , 4\pi\Cregqstar\rationum , \Cregpstar\left(C_4 + N^3\max_{a,b,\sigma, \iota}|c_{ab}^{\iota\sigma}|\CCCpstar + (N\max_{\iota}|\lambda_{\iota}| +2)\Cphi   \right) \right).
\end{aligned}
\end{equation}
Note that such constants fulfill the hypotheses of Lemmas \ref{lemma:nonlin1} to \ref{lemma:Vu}.
Suppose now that the induction hypothesis $H_\Phi(\phat, \ghat, k, \Cphi, \Aphi)$
holds for a $k\in \mathbb{N}$, $k\geq 2$: we will show that $H_\Phi(\phat, \ghat,
k+1, \Cphi, \Aphi)$ holds.

We start by remarking that, for all $\rho \in (0, R/(2(k+1))]$, there exists
$\rhotilde = \frac{k+1}{k}\rho$, so that, by induction hypothesis, for all $j=1,
\dots, k$ and all $\iota = 1, \dots, N$.
\begin{align*}
  \sum_{\alpham = j}\| r^{\alpham-\gamma}\dalpha\phi_\iota \|_{L^{p}(B_{R-(k+1) \rho})}
 & = 
   \sum_{\alpham = j}\| r^{\alpham-\gamma}\dalpha\phi_\iota \|_{L^{p}(B_{R-k \rhotilde})}  
   \leq
   \Cphi \Aphi^j (k\rhotilde)^{-j} j^j
   \\ & = 
   \Cphi \Aphi^j ((k+1)\rho)^{-j} j^j.
\end{align*}
We still have to show that
\begin{equation}
  \label{eq:toshow}
  \sum_{\alpham = k+1}\| r^{\alpham-\gamma}\dalpha\phi_\iota \|_{L^{p}(B_{R-(k+1) \rho})} \leq C_\Phi A_\Phi^{k+1}\rho^{-(k+1)}, \qquad \iota=1, \dots, N.
\end{equation}
From \eqref{eq:HF} and \eqref{eq:elliptic-weighted}, for all $\iota = 1, \dots, N$,
\begin{multline}
  \label{eq:analytic-proof1}
    \sum_{|\alpha| = k+1} \| r^{k+1-\gamma} \dalpha
    \phi_\iota\|_{L^\phat(B_{R-(k+1)\rho})} \\
    \begin{aligned}
    &\leq \Cregpstar \left( \sum_{\alpham= k-1} \| r^{k+1-\gamma}\dalpha\left(V\phi_\iota +
    \sum_{\sigma =1}^N \sum_{a<b}c^{\iota\sigma}_{ab} u_{ab} \phi_\sigma -  \lambda_{\iota} \phi_\iota\right) \|_{L^\phat(B_{R-k\rho})}\right.
    \\
    & \left.\quad+ \sum_{|\alpha| = k-1, k}\rho^{|\alpha|-k-1}\|r^{|\alpha|-\gamma}\dalpha \phi_\iota\|_{L^\phat(B_{R-|\alpha|\rho})} \right).
    \end{aligned}
\end{multline}
Due to Lemma \ref{lemma:Vu},
\begin{equation}
  \label{eq:Vu-proof}
 \sum_{\alpham= k-1} \| r^{k+1-\gamma}\dalpha\left(V\phi_\iota\right)\|_{L^\phat(B_{R-k\rho})} \leq
 C_4 \Aphi^{k-1} \rho^{-k+1} k^{-k+1} (k-1)^{k}
  \leq
 C_4 \Aphi^{k-1} \rho^{-k},
\end{equation}
where we have used $k\leq 1/\rho$
Furthermore, from Lemma \ref{lemma:nonlin3},
\begin{multline}
  \label{eq:nonlin-proof}
\sum_{\sigma=1}^N\sum_{a<b}|c_{ab}^{\iota\sigma}|\sum_{\alpham= k-1} \| r^{{2-\gamma}+\alpham} \dalpha \left( u_{ab}\phi_\sigma \right) \|_{L^{p} (B_{R-k\rho})} 
\\
\begin{aligned}
&\leq
N^3\max_{a,b,\iota, \sigma} |c_{ab}^{\iota\sigma}|\CCCpstar \Aphi^{k-1+3\thetahat} \rho^{-k+1-3\thetahat} (k-1)^{k-1} k^{-k+1} (k-1)
\\
& 
\leq
N^3\max_{a,b,\iota, \sigma} |c_{ab}^{\iota\sigma}|\CCCpstar \Aphi^{k-1+3\thetahat} \rho^{-k+1-3\thetahat} (k-1)
\\
& 
\leq
 N^3\max_{a,b,\iota, \sigma} |c_{ab}^{\iota\sigma}|\CCCpstar \Aphi^{k-1+3\thetahat} \rho^{-k-3\thetahat} 
\\
& 
\leq
N^3\max_{a,b,\iota, \sigma} |c_{ab}^{\iota\sigma}|\CCCpstar \Aphi^{k} \rho^{-k-1},
\end{aligned}
\end{multline}
where we have used, in the last two inequalities, the facts that $k-1\leq 1/\rho$
and that $3\thetahat  = 2d/\phat \leq 1$ due to \eqref{eq:p-cond}.

Finally, from the induction hypothesis,
\begin{equation}
  \label{eq:lambda-proof}
\sum_{\alpham = k-1}\sum_{\sigma =1}^N |\lambda_{\iota,\sigma} |  \|r^{k+1-\ghat}\dalpha\phi_\sigma \|_{L^\phat(B_{R-k\rho})}
\leq N\max_{\iota}|\lambda_{\iota} |  \Cphi \Aphi^{k-1} \rho^{-k+1}
\end{equation}
and 
\begin{equation}
  \label{eq:last-proof}
  \sum_{|\alpha| = k-1, k}\rho^{|\alpha|-k-1}\|r^{|\alpha|-\gamma}\dalpha \phi_\iota\|_{L^\phat(B_{R-k\rho})} 
  \leq
  \Cphi\Aphi^{k-1} \rho^{-k-1} + \Cphi\Aphi^{k} \rho^{-k-1}.
\end{equation}
From \eqref{eq:analytic-proof1}, using the triangular inequality, and
inequalities \eqref{eq:Vu-proof}, \eqref{eq:nonlin-proof},
\eqref{eq:lambda-proof}, and \eqref{eq:last-proof}, we obtain
\begin{multline*}
  \sum_{|\alpha| = k+1} \| r^{k+1-\gamma} \dalpha
    \phi_\iota\|_{L^\phat(B_{R-(k+1)\rho})} 
    \\
    \leq \Cregpstar\left(C_4 + N^3\max_{a,b,\sigma, \iota}|c_{ab}^{\iota\sigma}|\CCCpstar + (N\max_{\iota}|\lambda_{\iota}| +2)\Cphi   \right) \Aphi^k \rho^{-k-1}.
\end{multline*}
Therefore, \eqref{eq:toshow} holds thanks to \eqref{eq:CA-proof}, i.e., 
\begin{equation*}
 H_\Phi(\phat, \ghat,
k+1, \Cphi, \Aphi) 
\end{equation*}
holds. Therefore, by induction, $H_\Phi(\phat, \ghat, k, \Cphi, \Aphi)$ holds for all $k\in \mathbb{N}$.

\textbf{Analytic estimates in the $L^\infty$ norm.}
By Lemma \ref{lemma:imbedding} and since we have shown that
\eqref{eq:inductionhyp} holds for all $k\in \mathbb{N}$,
\begin{equation*}
 \|r^{-\eta+\alpham} \dalpha \phi_\iota \|_{L^\infty(B_{R-k\rho})} \leq \Cphi \alpham^2 \Aphi^\alpham (k\rho)^{-\alpham}\alpham^\alpham,
\end{equation*}
for all $\alpham \in \mathbb{N}$ and $\rho\in (0, R/(2k)]$.
Therefore, due to Stirling's inequality and since $R-k\rho \geq R/2
$, for all $0< \eta < \epsilon$ there exist
constants $\Ctil, \Atil>0$ such that
\begin{equation}
  \label{eq:analyticHF-proofend}
 \|r^{-\eta+\alpham} \dalpha \phi_i \|_{L^\infty(B_{R/2}(\fc))} \leq \Ctil \Atil^\alpham \alpham !.
\end{equation}
\end{proof}

\appendix

\section{Technical tools in weighted spaces}
\label{appendix:technical}
The results presented in this paper rely heavily on the theory of Kondrat'ev-type
weighted Sobolev spaces, that we introduce here. We also recall---mostly from
\cite{Maday2019b}, for self-containedness---a series of technical results that are ultimately necessary for the
proof of Theorem \ref{theorem:HFreg}. 

We denote by $\Omega\subset\mathbb{R}^d$, $d=2,3$, a bounded domain with smooth boundary
and consider the case of a single singular point $\fc\in \Omega$ lying in the
interior of the domain. The generalization to the case of multiple singular
points is straightforward. We denote by $r(x) = |x-\fc|$ the distance of a point
$x\in\mathbb{R}^d$ from the singular point. In the whole appendix, we
denote by $B_R = B_R(\fc)$ $d$-dimensional balls centered in $\fc$ of radius $R>0$.
Finally, for $k\in \mathbb{N}_0$ and $1\leq p\leq \infty$, we denote by
  $W^{k, p}(\Omega)$ the classical $L^p(\Omega)$-based Sobolev spaces of order $k$.

\subsection{Weighted Sobolev spaces}
For integer $k \in\mathbb{N}_0$, a real weight exponent $\gamma\in \mathbb{R}$, 
and summability exponent $1\leq p < \infty$, 
we introduce the \emph{homogeneous weighted Sobolev spaces} 
$\cK^{k, p}_\gamma(\Omega)$. 
Given the seminorm
\begin{equation}
  \label{eq:Kseminorm}
|w|_{\cK^{k,p}_\gamma(\Omega)} 
= 
\left( \sum_{\alpham= k} \|r^{\alpham -\gamma}\dalpha w\|^p_{L^p(\Omega)}  \right)^{1/p},
\end{equation}
so that the spaces $\cK^{k, p}_\gamma(\Omega)$ are normed by
\begin{equation*}
\|w\|_{\cK^{k,p}_\gamma(\Omega)}  
= 
\left( \sum_{j=0}^{ k} |w|^p_{\cK^{j,p}_\gamma(\Omega)} \right)^{1/p}.
\end{equation*}
We denote the weighted Kondrat'ev type spaces of infinite regularity by
\begin{equation*}
\cK^{\infty,p}_\gamma(\Omega) = \bigcap_{k\in\mathbb{N}} \cK^{k, p}_\gamma(\Omega).
\end{equation*}
Furthermore, for constants $C, A>0$
we introduce the \emph{homogeneous weighted analytic-type} class
\begin{equation*}
\cK^{\varpi,p}_\gamma(\Omega; A) 
= 
\left\{ v\in \cK^{\infty,p}_\gamma(\Omega): \left| v\right|_{\cK^{k, p}_\gamma(\Omega)}
\leq 
A^{k+!}k!, \, \text{for all }k\in\mathbb{N}_0 \right\}.
\end{equation*}
Consider a continuous function $u$: we remark that, if $\gamma>d/p$, then $u\in \cK^{0,p}_{\gamma}(\Omega)$ only if
$u(\fc) = 0$. This condition is clearly not
fulfilled by solutions to \eqref{eq:HF-integral}, which are, in general, nonzero
at the singular point of the potential. For this reason, 
our focus will be mostly on \emph{non-homogeneous weighted Sobolev spaces}
The \emph{non-homogeneous analytic classes} 
are given by
\begin{equation}
\label{eq:Janalytic}
\cJ^{\varpi,p}_\gamma(\Omega; A) 
= 
\left\{ v\in W^{\lfloor \gamma-d/p \rfloor, p}(\Omega): | v |_{\cK^{k,p}_\gamma(\Omega)}
\leq 
 A^{k+1}k!, \text{for all }k\in \mathbb{N}_0: k>\gamma-d/p \right\}.
\end{equation}
For a detailed analysis of the
relationship between homogeneous and non homogeneous spaces, we refer the reader
to \cite{Kozlov1997} and \cite{Costabel2010a}.
\begin{remark}
  Using definition \eqref{eq:Janalytic}, the thesis of Theorem
  \ref{theorem:HFreg} can be restated as: for all $\eta < \epsilon$, there
  exists $A>0$ such that
  \begin{equation*}
     \phi_\iota \in \cJ^{\varpi, \infty}_{\eta}(\cup_{\fc\in\fC}B_D(\fc); A),\qquad \forall\, \iota=1, \dots, N.
  \end{equation*}
\end{remark}
\subsection{Local elliptic estimate}
\label{sec:localelliptic}
We report here, for the sake of self-containedness, a result on local weighted
elliptic regularity. This has already been introduced in \cite{Maday2019b}, and
has been proven, as an intermediate result, in \cite{Costabel2012}.
We denote the commutator by square brackets, i.e., we write
\begin{equation*}
  \left[ A, B \right] = AB-BA.
\end{equation*}
\begin{proposition}
  \label{lemma:elliptic-weighted}
  Let $1<p<\infty$, $R>0$, and $\gamma \in \mathbb{R}$. Then, there exists
    $\Cregp \geq 1$ such that for all $k\in\mathbb{N}$ and $\rho\in(0, \frac{R}{2(k+1)}]$.
   and $j\in \mathbb{N}$ such that $1\leq j \leq k$,
  \begin{multline}
    \label{eq:elliptic-weighted}
    \sum_{|\alpha| = k+1} \| r^{k+1-\gamma} \dalpha u\|_{L^p(B_{R-(j+1)\rho})}
    \leq \Cregp \left(\sum_{|\beta| = k-1} \| r^{k+1-\gamma}\dbeta(\Delta u)\|_{L^p(B_{R-j\rho})} \right.\\ \left.
+ \sum_{|\alpha| = k}\rho^{-1}\|r^{|\alpha|-\gamma}\dalpha u\|_{L^p(B_{R-j\rho})}
+ \sum_{|\alpha| = k-1}\rho^{-2}\|r^{|\alpha|-\gamma}\dalpha u\|_{L^p(B_{R-j\rho})}.
  \right)
  \end{multline}
\end{proposition}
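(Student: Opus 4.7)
The strategy is to reduce this weighted estimate to the classical Calderón--Zygmund interior estimate for the Laplacian via a covering argument that inserts the weight $r^{k+1-\gamma}$ locally on small balls where $r$ is either approximately constant or uniformly comparable to $\rho$. The structure of the claim mirrors the classical Cattabriga/Caccioppoli estimate, and the new content lies in the way the weight $r^{k+1-\gamma}$ on the left is redistributed as $r^{k-\gamma}$ and $r^{k-1-\gamma}$ on the right, matching the natural homogeneity of Kondrat'ev-type spaces.

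First I would invoke the unweighted interior $L^p$ estimate: for smooth $v$ and any $x_0$,
\begin{equation*}
  \|D^2 v\|_{L^p(B_\rho(x_0))} \leq C_0 \left(\|\Delta v\|_{L^p(B_{2\rho}(x_0))} + \rho^{-1}\|Dv\|_{L^p(B_{2\rho}(x_0))} + \rho^{-2}\|v\|_{L^p(B_{2\rho}(x_0))}\right),
\end{equation*}
with $C_0 = C_0(p,d)$. Applying this to $v = \dbeta u$ for each $|\beta|=k-1$, using $\Delta\dbeta u = \dbeta \Delta u$, and summing over $\beta$ gives a local unweighted version of the target inequality. I would then cover $B_{R-(j+1)\rho}$ by a family $\{B_\rho(x_i)\}_i$ whose doubled balls $B_{2\rho}(x_i) \subset B_{R-j\rho}$ have bounded overlap (a Vitali/Besicovitch construction); the constraints $\rho \leq R/(2(k+1))$ and $j\leq k$ are exactly what make such a covering fit inside the outer ball.

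The key technical step is the insertion of the weight, which I would handle by splitting the covering into two regimes. In the \emph{far regime} $r(x_i) \geq 4\rho$, the weight $r^s$ is comparable to $r(x_i)^s$ on $B_{2\rho}(x_i)$ for every $s$; multiplying the local unweighted estimate by $r(x_i)^{k+1-\gamma}$ and using the identities $r(x_i)^{k+1-\gamma} = r(x_i)\cdot r(x_i)^{k-\gamma} = r(x_i)^2 \cdot r(x_i)^{k-1-\gamma}$ together with the bound $r(x_i) \leq R$ transfers the weight consistently onto each right-hand-side term. In the \emph{near regime} $r(x_i) < 4\rho$, the doubled ball lies in $B_{6\rho}(\fc)$ and one compares $r^{k+1-\gamma}$ pointwise (or in an $L^p$-averaged sense) to $\rho \cdot r^{k-\gamma}$ and $\rho^2 \cdot r^{k-1-\gamma}$ on this ball, which cancels cleanly against the $\rho^{-1}$ and $\rho^{-2}$ factors. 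Summing the resulting local weighted estimates over the covering, and exploiting bounded overlap of the doubled balls, then yields the required global inequality on $B_{R-(j+1)\rho}$.

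The main obstacle I anticipate is guaranteeing that the resulting constant $\Cregp$ is \emph{independent of $k$}, which is indispensable for the induction driving Theorem~\ref{theorem:HFreg}: the weight-redistribution manipulations must introduce only finitely many factors depending on $R$, $\gamma$, $p$, and $d$, so that all $k$-dependence is carried by the explicit $\rho^{-1}$, $\rho^{-2}$ scaling. This uniformity is possible because the far/near split is dictated by the ratio $r(x_i)/\rho$ rather than by $k$, and because the underlying Calderón--Zygmund constant depends only on $p$ and $d$.
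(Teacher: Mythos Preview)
Your covering approach is genuinely different from the paper's, and as written it has a real gap exactly where you anticipate one: the constant is not independent of $k$.

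In the far regime you freeze the weight, asserting that $r^s$ is comparable to $r(x_i)^s$ on $B_{2\rho}(x_i)$. That is true, but the comparability constant is $(\sup r/\inf r)^{|s|}$. With $r(x_i)\ge 4\rho$ one only has $r/r(x_i)\in[1/2,3/2]$ on $B_{2\rho}(x_i)$, so freezing and unfreezing together introduce a factor of order $3^{|k+1-\gamma|}$, which grows exponentially in $k$ because the \emph{exponent} of the weight --- not merely the far/near threshold --- depends on $k$. Your closing argument that uniformity follows because the split is governed by $r(x_i)/\rho$ does not address this dependence on the exponent $k+1-\gamma$. The near-regime argument is also incomplete: the pointwise bound $r^{k+1-\gamma}\le C\rho\, r^{k-\gamma}$ only helps on the left-hand side, whereas to reinsert the weight on the right of the unweighted Calder\'on--Zygmund estimate you would need the reverse inequality, which fails near $\fc$ where $r$ vanishes.

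The paper sidesteps both issues by applying interior elliptic regularity not to $\partial^\beta u$ but to the compactly supported function $w=r^{|\beta|+2-\gamma}\eta\,\partial^\beta u$ (with $|\beta|=k-1$ and $\eta$ a single cutoff between $B_{R-(j+1)\rho}$ and $B_{R-j\rho}$), so that the weight sits inside the function on which the $k$-independent constant $C_\Delta$ acts. Expanding $\Delta w$ then produces commutators $[\Delta,r^{|\beta|+2-\gamma}]$ and $[\Delta,\eta]$; the former contributes factors of size $s^2 r^{s-2}\sim k^2 r^{|\beta|-\gamma}$ with $s=|\beta|+2-\gamma$, and the hypothesis $\rho\le R/(2(k+1))$ gives $\rho^{-2}\gtrsim k^2/R^2$, so these $k^2$ factors are absorbed into the explicit $\rho^{-1},\rho^{-2}$ terms on the right. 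This absorption, made precise in the auxiliary commutator Lemma~\ref{lemma:commutator1}, is the mechanism that delivers a $k$-independent $\Cregp$, and it has no analogue in your scheme.
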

For the proof of Proposition \ref{lemma:elliptic-weighted}, we introduce a smooth cutoff function  
$\eta\in C^\infty_0(B_{R-j\rho})$ such that for $\alpha \in \mathbb{N}^d$,
$\alpham \leq 2$
\begin{equation}
  \label{eq:etadef}
  0\leq \eta \leq 1, \qquad \eta=1 \text{ on }B_{R-(j+1)\rho},\qquad \left| \dalpha\eta\right|\leq C_\eta \rho^{-|\alpha|},
\end{equation}
and we introduce an auxiliary estimate (see \cite{Maday2019b} for the proof)
\begin{lemma}{\cite[Lemma 9]{Maday2019b}}
  \label{lemma:commutator1}
Let $1<p<\infty$, $R>0$, and $\gamma \in \mathbb{R}$ .
There exists $C>0$ such that, for all
  $\beta \in \mathbb{N}^d_0$, $\rho\in(0,
  \frac{R}{2(|\beta|+2)}]$, and $j\in \mathbb{N}$ such that $1 \leq j \leq |\beta|+1$,
  \begin{equation}
    \label{eq:commutator1}
    \sum_{|\alpha|=2}\|\left[ \dalpha, r^{|\beta|+2-\gamma}\right] \eta \dbeta u\|_{L^p(B_{R-j\rho})}\leq 
C \sum_{|\alpha| \leq 1} \rho^{-2+|\alpha|}\| r^{|\beta|+|\alpha|-\gamma} \dab u\|_{L^p(B_{R-j\rho})},
  \end{equation}
  and $C$ depends only on $\gamma$, $R$.
\end{lemma}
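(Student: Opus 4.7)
The plan is to expand the commutator explicitly via Leibniz's rule and to control each resulting term using size estimates for derivatives of $r^s$ (with $s = |\beta|+2-\gamma$) and of the cutoff $\eta$. For $|\alpha|=2$, write $\partial^\alpha = \partial_i \partial_k$ and set $w = \eta\, \partial^\beta u$; a direct calculation gives
\begin{equation*}
[\partial_i \partial_k, r^{s}]\, w \;=\; \partial_i \partial_k(r^{s})\, w \;+\; \partial_i(r^{s})\, \partial_k w \;+\; \partial_k(r^{s})\, \partial_i w.
\end{equation*}
Then $\partial_i w = (\partial_i \eta)\, \partial^\beta u + \eta\, \partial_i \partial^\beta u$, so every summand will ultimately be a product of a (possibly signed) power of $r$, a derivative of $\eta$ bounded by $C_\eta \rho^{-|\sigma|}$, and a derivative $\partial^{\alpha'+\beta} u$ with $|\alpha'|\leq 1$. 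This reduces the task to pointwise bookkeeping.

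The next step is to record the elementary pointwise bounds. Since $r(x) = |x-\fc|$, one has $|\partial_i r|\leq 1$ and $|\partial_i \partial_k r| \leq 2/r$ away from $\fc$, whence
\begin{equation*}
|\partial_i r^{s}| \leq |s|\, r^{s-1}, \qquad |\partial_i \partial_k r^{s}| \leq (s^2 + |s|)\, r^{s-2}.
\end{equation*}
Inserting these together with $|\eta|\leq 1$ and $|\partial^\sigma \eta| \leq C_\eta \rho^{-|\sigma|}$, the three terms above are controlled, respectively, by a multiple of
\begin{equation*}
(s^2+|s|)\, r^{|\beta|-\gamma}\, |\partial^\beta u|, \quad |s|\, \rho^{-1}\, r^{|\beta|+1-\gamma}\, |\partial^\beta u|, \quad |s|\, r^{|\beta|+1-\gamma}\, |\partial^{\alpha'+\beta}u| \;(|\alpha'|=1),
\end{equation*}
plus the analogous symmetric contributions. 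Taking $L^p(B_{R-j\rho})$ norms and using that $r \leq R$ on this ball to absorb the extra factor $r$ where needed, one produces exactly the types of terms $\rho^{-2+|\alpha|}\|r^{|\beta|+|\alpha|-\gamma}\partial^{\alpha+\beta}u\|_{L^p}$ appearing on the right hand side.

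The main point, and essentially the only subtlety, is that $s = |\beta|+2-\gamma$ grows with $|\beta|$, so the prefactors $s$ and $s^2$ would prima facie spoil the claim that $C$ depends only on $\gamma$ and $R$. This is remedied by the standing constraint $\rho \leq R/(2(|\beta|+2))$, which gives $|\beta|+2 \leq R/(2\rho)$ and hence $|s| \leq C(\gamma, R)\, \rho^{-1}$ and $s^2 \leq C(\gamma,R)\, \rho^{-2}$. Substituting these into the bounds above converts each power of $s$ into the correct negative power of $\rho$, producing the $\rho^{-2}$, $\rho^{-1}$ prefactors demanded on the right hand side with a constant depending only on $\gamma$ and $R$ (through $C_\eta$, which itself depends only on the cutoff construction in $B_R$). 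Summing over the finitely many $\alpha$ with $|\alpha|=2$ then yields \eqref{eq:commutator1}.
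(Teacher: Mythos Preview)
Your argument is correct. The paper does not give its own proof of this lemma but defers to \cite[Lemma 9]{Maday2019b}; your approach---expanding the commutator via Leibniz's rule, using the pointwise bounds $|\partial_i r^s|\lesssim |s|\,r^{s-1}$ and $|\partial_i\partial_k r^s|\lesssim (s^2+|s|)\,r^{s-2}$, and then invoking the constraint $\rho\leq R/(2(|\beta|+2))$ to convert the $|\beta|$-dependent factors $s$, $s^2$ into the required negative powers of $\rho$---is precisely the expected one.
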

\begin{proof}[Proof of Proposition \ref{lemma:elliptic-weighted}]
 Let us consider a multiindex $\beta$. First,
 \begin{multline}
   \label{eq:ellipticreg1}
   \sum_{|\alpha|=2} \|r^{|\beta|+2-\gamma }\dab u\|_{L^p(B_{R-(j+1)\rho})} \leq 
\sum_{|\alpha|=2}\left\{ \|\dalpha \left(r^{|\beta|+2-\gamma} \dbeta u\right)\|_{L^p(B_{R-(j+1)\rho}) } \right. \\ 
\left.+ \|\left[\dalpha, r^{|\beta|+2-\gamma} \right]\dbeta u\|_{L^p(B_{R-(j+1)\rho}) }\right\}.
 \end{multline}
 We consider the first term at the right hand side: using \eqref{eq:etadef}
 \begin{equation*}
\sum_{|\alpha|=2} \|\dalpha \left(r^{|\beta|+2-\gamma} \dbeta u\right)\|_{L^p(B_{R-(j+1)\rho}) } \leq 
\sum_{|\alpha|=2} \|\dalpha \left(r^{|\beta|+2-\gamma} \eta\dbeta u\right)\|_{L^p(B_{R-j\rho}) }
 \end{equation*}
and by elliptic regularity and using the triangular inequality, there exists
$C_\Delta$ depending only on $p$ and $R$ such that
\begin{multline*}
\sum_{|\alpha|=2} \|\dalpha \left(r^{|\beta|+2-\gamma} \eta\dbeta u  \right)\|_{L^p(B_{R-j\rho}) }
\\
\begin{aligned}
&\leq
C_\Delta  \|\Delta \left(r^{|\beta|+2-\gamma} \eta\dbeta u\right)\|_{L^p(B_{R-j\rho}) }\\
&\leq 
C_\Delta \left(  \|r^{|\beta|+2-\gamma} \eta\Delta \dbeta u\|_{L^p(B_{R-j\rho}) } 
+ 
  \|\left[\Delta,r^{|\beta|+2-\gamma}\right] \eta\dbeta u\|_{L^p(B_{R-j\rho}) } 
+
  \|r^{|\beta|+2-\gamma}\left[\Delta,\eta\right] \dbeta u\|_{L^p(B_{R-j\rho}) } 
 \right).
\end{aligned}
\end{multline*}
Combining the last inequality with \eqref{eq:ellipticreg1} we obtain
\begin{multline}
  \label{eq:reg-p1}
   \sum_{|\alpha|=2} \|r^{|\beta|+2-\gamma }\dab u\|_{L^p(B_{R-(j+1)\rho})} \\
\begin{aligned}
 &\leq 
  C_\Delta \left(  \|r^{|\beta|+2-\gamma} \eta\dbeta\left(\Delta u\right)\|_{L^p(B_{R-j\rho}) } 
   +
    \sum_{i=1}^d\| r^{|\beta|+2-\gamma} \left(\partial_{ii}\eta\right) \dbeta u\|_{L^p(B_{R-j\rho}) } \right.\\
&\qquad\left.+ 2\sum_{i=1}^d\| r^{|\beta|+2-\gamma} \left(\partial_{i}\eta\right) \partial^{\beta}\partial_i u\|_{L^p(B_{R-j\rho}) } \right) 
+
 (1+C_\Delta)\sum_{|\alpha|=2}\|\left[\dalpha, r^{|\beta|+2-\gamma} \right]\dbeta u\|_{L^p(B_{R-j\rho}) }.
\end{aligned}
\end{multline}
The bounds on the derivatives of $\eta$ given in \eqref{eq:etadef} and the
estimate of Lemma \ref{lemma:commutator1} applied to \eqref{eq:reg-p1} then imply
the existence of a constant $C$ dependent on $p$, $\gamma$, and
$R$ such that
\begin{align*}
   \sum_{|\alpha|=2} \|r^{|\beta|+2-\gamma }\dab u\|_{L^p(B_{R-(j+1)\rho})} 
  &\leq
  C_\Delta\|r^{|\beta|+2-\gamma} \eta\dbeta\left(\Delta u\right)\|_{L^p(B_{R-j\rho}) } \\
  &\quad +
    C\sum_{|\alpha|\leq 1} \rho^{-2+|\alpha|} \|r^{|\beta|+|\alpha| -\gamma} \dab u\|_{L^p(B_{R-j\rho})}.
\end{align*}
We can now sum over all multi indices $\beta$ such that $|\beta| = k-1$ to obtain the thesis \eqref{eq:elliptic-weighted}.
\end{proof}
\subsection{Weighted interpolation estimate}
  \label{sec:thetaprod}
\begin{lemma}
  \label{lemma:thetaprod}
  Let $R>0$ such that $B_R\subset B_1$, $\gamma-d/p \geq - 2/3$, and $p\geq
  \frac{2}{3}d$. There exists a constant $\Cinterp>0$ such that for all
  $\beta\in \mathbb{N}^d_0$ and $u\in \mathcal{K}^{\betam+1,
    p}_\gamma(B_R)$ the following ``interpolation'' estimate holds
  \begin{multline}
  \label{eq:thetaprod}
  \|r^{\frac{2-\gamma}{3}+|\beta|}\dbeta u \|_{L^{3 p}(B_R)} \leq \Cinterp\|r^{\betam-\gamma} \dbeta u\|^{1-\theta}_{L^p(B_R)}
\left\{ \vphantom{\sum_{i=1}^d} (\betam+1)^\theta \| r^{\betam-\gamma}\dbeta u\|^\theta_{L^p(B_R)} \right. \\ 
\left.+ \sum_{i=1}^d \| r^{\betam+1-\gamma} \partial^{\beta}\partial_i u \|^\theta_{L^p(B_R)}\right\},
  \end{multline}
  with $\theta = \frac{2}{3}\frac{d}{p}$.
\end{lemma}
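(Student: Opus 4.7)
The plan is to reduce \eqref{eq:thetaprod} to the classical (unweighted) Gagliardo--Nirenberg interpolation inequality by a dyadic decomposition of $B_R\setminus\{\fc\}$ into annuli on which the radial weight $r$ is essentially constant, rescaling each annulus to unit size, and reassembling the local bounds into a global one through H\"older's inequality.

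Concretely, I would partition $B_R\setminus\{\fc\}$ into $A_j=\{2^{-j-1}R<r<2^{-j}R\}$, $j\in\mathbb{N}_0$, on each of which $r\sim\lambda_j:=2^{-j}R$ with comparability constants independent of $j$. Rescaling $A_j$ to the unit annulus $\widetilde A=\{1/2<|y|<1\}$ and applying classical Gagliardo--Nirenberg with exponent $\theta=\tfrac{2d}{3p}\in[0,1]$---admissible because $p\geq 2d/3$ forces $\theta\leq 1$ and because $1/(3p)=1/p-\theta/d$ matches the GN scaling identity---yields, after scaling back,
\[
\|\dbeta u\|_{L^{3p}(A_j)}\leq C(d,p)\,\lambda_j^{-\theta}\|\dbeta u\|_{L^p(A_j)}^{1-\theta}\bigl(\|\dbeta u\|_{L^p(A_j)}+\lambda_j\|\nabla\dbeta u\|_{L^p(A_j)}\bigr)^\theta.
\]
Multiplying through by $\lambda_j^{(2-\gamma)/3+\betam}$ and absorbing the powers $\lambda_j^{\betam-\gamma}$ and $\lambda_j^{\betam+1-\gamma}$ into the weighted norms produces
\[
\|r^{(2-\gamma)/3+\betam}\dbeta u\|_{L^{3p}(A_j)}\leq C\,\lambda_j^{\frac{2}{3}(1+\gamma-d/p)}\,X_j^{1-\theta}\bigl(X_j+Y_j\bigr)^\theta,
\]
where $X_j:=\|r^{\betam-\gamma}\dbeta u\|_{L^p(A_j)}$ and $Y_j:=\sum_{i=1}^d\|r^{\betam+1-\gamma}\partial^\beta\partial_i u\|_{L^p(A_j)}$. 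The hypothesis $\gamma-d/p\geq -2/3$ together with $R\leq 1$ makes the prefactor $\lambda_j^{\frac{2}{3}(1+\gamma-d/p)}$ uniformly bounded by $1$.

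To reassemble the local bounds, I would apply the elementary inequality $(a+b)^\theta\leq a^\theta+b^\theta$, raise the annular estimate to the power $3p$, sum over $j$, and use both $\sum_j X_j^{3p}\leq\bigl(\sum_j X_j^p\bigr)^3=X^{3p}$ (valid for nonnegative reals) and H\"older's inequality with the conjugate pair $(1/(1-\theta),1/\theta)$ on the cross term $\sum_j X_j^{3p(1-\theta)}Y_j^{3p\theta}$. Taking $3p$-th roots and using the subadditivity of $t\mapsto t^{1/(3p)}$ delivers \eqref{eq:thetaprod}; the factor $(\betam+1)^\theta$ on the right-hand side of \eqref{eq:thetaprod} is slack unused by this argument (alternatively it arises naturally if one applies Gagliardo--Nirenberg to $v:=r^{\betam-\gamma}\dbeta u$ directly on the annulus, since $\nabla v$ produces a Leibniz correction $(\betam-\gamma)r^{\betam-\gamma-1}\dbeta u\cdot\omega$ which contributes an extra $(\betam-\gamma)X_j$ inside the interpolation).

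The main obstacle is the careful bookkeeping of the scaling exponents when transferring the unweighted Gagliardo--Nirenberg inequality on $\widetilde A$ to its weighted annular counterpart; the hypothesis $\gamma-d/p\geq-2/3$ is precisely what is needed to ensure that the resulting $\lambda_j$-exponent $\tfrac{2}{3}(1+\gamma-d/p)\geq 2/9$ is nonnegative, so that the annular bounds can be summed without blow-up at $\fc$. Once this is ensured, all remaining steps are standard manipulations of Gagliardo--Nirenberg and H\"older inequalities.
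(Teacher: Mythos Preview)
Your proposal is correct and follows essentially the paper's approach: dyadic annular decomposition, rescaling to a reference annulus, Gagliardo--Nirenberg interpolation there, and reassembly by summing via H\"older. The one minor variation is that you apply the interpolation inequality to $\partial^\beta u$ itself rather than to the weighted function $\hat r^{(2-\gamma)/3+|\beta|}\hat\partial^\beta \hat u$ as the paper does, which is exactly why the factor $(|\beta|+1)^\theta$ is slack in your argument but arises naturally (via the Leibniz rule when differentiating the weight) in the paper's version.
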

\begin{proof}
  Consider a dyadic decomposition of $B_1$ given by the sets 
  \begin{equation*}
    V^j = \left\{ x\in B_1 : 2^{-j} \leq |x|\leq 2^{-j+1} \right\}, \; j=1, 2, \dots
  \end{equation*}
   and decompose the ball $B_R$ into its intersections with the sets belonging to the decomposition, i.e., into $B^j = B_R \cap V^j$. Let us introduce the linear maps $\chi_j : V^1\to V^j$ and write with a hat the pullback of functions by $\chi_j^{-1}$, e.g, $\hr = r\circ\chi_j^{-1}$ and $\hB^j = \chi_j^{-1}(B^j)$. Then, 
   \begin{equation*}
     \|r^{\frac{2-\gamma}{3}+\betam} \dbeta u\|_{L^{3 p}(B^j)} \leq 
     2^{\frac{j}{3}(\gamma-2-d/p)}
     \|\hr^{\frac{2-\gamma}{3}+\betam} \hat{\partial}^\beta \hat{u}\|_{L^{3 p}(\hB^j)} 
   \end{equation*}
   We can now use the interpolation inequality
   \begin{equation*}
     \| v\|_{L^{3 p}(B)} \leq C \|v\|^{1-\theta}_{L^p(B)} \|v\|^\theta_{W^{1, p}(B)},
   \end{equation*}
   for $B\subset\mathbb{R}^d$, $v\in W^{1, p}(B)$ and with $\theta$ defined as above, see \cite{DallAcqua2012}. Therefore,
   \begin{equation}
     \label{eq:interp-ref}
     \|r^{\frac{2-\gamma}{3}+\betam} \dbeta u\|_{L^{3 p}(B^j)} 
      \leq C
     2^{\frac{j}{3}(\gamma-2-d/p)}
     \|\hr^{\frac{2-\gamma}{3}+\betam} \hat{\partial}^\beta \hat{u}\|^{1-\theta}_{L^{p}(\hB^j)} 
      \sum_{\alpham = 1} \|\hat{\partial}^\alpha \hr^{\frac{2-\gamma}{3}+\betam}\hat{\partial}^\beta \hat{u}\|^\theta_{L^p(\hB^j)}.
   \end{equation}
   Let us now consider the first norm in the product above. Since $\hr \in
   (1/2,1)$, we can inject in the norm a term $\hr^{\frac{2}{3}\gamma }\leq
   \max(1,2^{\frac{2}{3}|\gamma|} )= C(\gamma)$, i.e.,
     \begin{equation*}
     \|\hr^{\frac{2-\gamma}{3}+\betam} \hat{\partial}^\beta \hat{u}\|^{1-\theta}_{L^{p}(\hB^j)}  \leq C \|\hr^{\betam-\gamma} \hat{\partial}^\beta \hat{u}\|^{1-\theta}_{L^{p}(\hB^j)} .
\end{equation*}
We now compute more explicitly the second norm in the product in \eqref{eq:interp-ref}:
\begin{equation*}
      \sum_{\alpham = 1} \|\hat{\partial}^\alpha \hr^{\frac{2-\gamma}{3}+\betam}\hat{\partial}^\beta \hat{u}\|^\theta_{L^p(\hB^j) }
  \leq \left( \betam + \frac{2-\gamma}{3}\right)^\theta \|\hr^{\frac{2-\gamma}{3}+\betam-1} \hat{\partial}^\beta \hat{u}\|^\theta_{L^p(\hB^j) } 
+ \sum_{i=1}^d \|\hr^{\frac{2-\gamma}{3}+\betam} \hat{\partial}^{\beta }\partial_i\hat{u}\|^\theta_{L^p(\hB^j) }
\end{equation*}
and we may adjust the exponents of $\hr$ and the term in $\frac{2-\gamma}{3}$ introducing a constant that depends on $\gamma$, $d$ and $p$, obtaining
\begin{equation*}
      \sum_{\alpham = 1} \|\hat{\partial}^\alpha \hr^{\frac{2-\gamma}{3}+\betam}\hat{\partial}^\beta \hat{u}\|^\theta_{L^p(\hB^j) } 
\leq C
  \left( \betam + 1\right)^\theta \|\hr^{\betam -\gamma} \hat{\partial}^\beta \hat{u}\|^\theta_{L^p(\hB^j) } 
+ \sum_{i=1}^d \|\hr^{\betam-\gamma+1} \hat{\partial}^{\beta }\partial_i\hat{u}\|^\theta_{L^p(\hB^j) }.
\end{equation*}
Scaling everything back to $B^j$ and adjusting the exponents,
\begin{multline*}
  \|r^{\frac{2-\gamma}{3}+\betam} \dbeta u\|_{L^{3 p}(B^j)} \leq 
  C 2^{j\left(\gamma-d/p-2/3\right)}
  \|r^{\betam-\gamma} {\partial}^\beta u\|^{1-\theta}_{L^{p}(B^j)} 
\left\{ \vphantom{\sum_{i=1}^d}\left( \betam + 1\right)^\theta \|r^{\betam -\gamma} {\partial}^\beta {u}\|^\theta_{L^p(B^j) }  \right.\\
\left. + \sum_{i=1}^d \|r^{\betam-\gamma+1} {\partial}^{\beta }\partial_i{u}\|^\theta_{L^p(B^j) }\right\}.
\end{multline*}
If $\gamma-d/p \geq -2/3$ then we can sum over all $j=1,2, \dots$ thus obtaining the estimate \eqref{eq:elliptic-weighted} on the whole ball $B_R$.
\end{proof}

\subsection{An imbedding result}
\begin{lemma}
  \label{lemma:imbedding}
  Let $p\geq 2$, $R>0$, and $\gamma \in \mathbb{R}$ such that $\gamma -d/p > 0 $.
  Then, there exists $C$ such that
for all $\ell\in \mathbb{N}_0$ and all $v\in \cK^{\infty,p}_\gamma(B_R)$.
  \begin{equation*}
   \|v\|_{\cK^{\ell, \infty}_{\gamma-d/p}(B_R)}\leq C(\ell+1)^2\|v\|_{\cK^{\ell+2, p}_\gamma(B_R)}.
  \end{equation*}
\end{lemma}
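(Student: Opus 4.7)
The plan is to combine a dyadic decomposition of $B_R$ with standard Sobolev embedding on a reference annulus of unit size, in the same spirit as the proof of Lemma~\ref{lemma:thetaprod}. I would decompose $B_R$ into the shells $B^j = B_R \cap V^j$ with $V^j = \{2^{-j}\leq |x|\leq 2^{-j+1}\}$, on each of which $r$ is comparable to $2^{-j}$, and pull back via the dilation $\chi_j(\hat x) = 2^{-j}\hat x$ so that I may work on the fixed reference annulus $V^1$, where $\hat r = r\circ\chi_j \in [1,2]$.

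For a fixed multi-index $\alpha$ with $|\alpha|=k\leq\ell$ I would set $s = k - (\gamma-d/p)$ and apply the classical Sobolev embedding $W^{2,p}(V^1)\hookrightarrow L^\infty(V^1)$, which is valid since $p\geq 2 > d/2$ for $d\in\{2,3\}$, to the rescaled weighted derivative $\hat r^{\,s}\,\hat\partial^\alpha \hat v$. Tracking the scaling of derivatives and $L^p$-norms under $\chi_j$, the weight exponent $s$ is exactly tuned so that the $2^{-js}$ arising from $r^s$ cancels the $2^{jd/p - j|\beta|}$ coming from the change of variables; this would yield, on each shell,
\begin{equation*}
  \|r^{s}\partial^\alpha v\|_{L^\infty(B^j)} \leq C(1+|s|)^2\sum_{|\beta|\leq 2}\|r^{k+|\beta|-\gamma}\partial^{\alpha+\beta} v\|_{L^p(B^j)}.
\end{equation*}
The $(1+|s|)^2$ factor comes solely from applying Leibniz's rule to $\hat\partial^\beta(\hat r^{\,s}\hat\partial^\alpha \hat v)$ with $|\beta|\leq 2$: each derivative falling on $\hat r^{\,s}$ produces a factor of order $|s|$, and since $s = k - (\gamma-d/p)$ is bounded by $\ell$ up to additive constants, the polynomial dependence on $\ell$ comes exclusively from this step.

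To assemble the global estimate, I would take the supremum over $j$ on the left-hand side and use the trivial inequality $\sup_j a_j \leq \bigl(\sum_j a_j^p\bigr)^{1/p}$ on the right, which converts the per-shell bound into
\begin{equation*}
  \|r^{s}\partial^\alpha v\|_{L^\infty(B_R)} \leq C(\ell+1)^2\|v\|_{\cK^{\ell+2,p}_\gamma(B_R)},
\end{equation*}
and then maximize over $|\alpha|\leq\ell$. The hypothesis $\gamma - d/p > 0$ is used both to make the weighted $L^\infty$ quantity on the left meaningful (encoding the correct vanishing rate of $v$ at the singular point) and to guarantee that the dyadic sum on the right is controlled by the global $\cK^{\ell+2,p}_\gamma$-norm.

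The main obstacle I anticipate is purely bookkeeping: verifying that the powers of $2^j$ produced by $\chi_j$, by the weight $r^s$, and by the pull-back of derivatives conspire exactly to reproduce the target weight $r^{|\alpha+\beta|-\gamma}$ on the right, and isolating the polynomial factor $(\ell+1)^2$ so that it arises \emph{only} from Leibniz's rule on $\hat r^{\,s}$ and not from the counting of multi-indices or from the change of variables.
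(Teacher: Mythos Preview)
Your proposal is correct and follows essentially the same route as the paper: dyadic annuli, pull-back to a fixed reference annulus, the Sobolev embedding $W^{2,p}\hookrightarrow L^\infty$ applied to the weighted derivative, and the $(\ell+1)^2$ factor arising from Leibniz's rule when derivatives hit the weight. The only cosmetic differences are that the paper works with $\hat r^{|\alpha|}$ rather than $\hat r^{\,s}$ on the reference annulus (equivalent since $\hat r\sim 1$ there) and bounds the right-hand side on each shell directly by the global $\cK^{\ell+2,p}_\gamma(B_R)$-norm rather than invoking $\sup_j a_j\leq(\sum_j a_j^p)^{1/p}$; note also that the powers of $2^j$ in fact cancel exactly regardless of the sign of $\gamma-d/p$, so that hypothesis is not actually needed to close the dyadic sum.
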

\begin{proof}
We prove the lemma for $R=1$; the general case with $R>0$ follows by homothety (with
constants depending on $R$). Consider the annuli
  \begin{equation*}
    \Gamma_j = \left\{ x\in B_1 : 2^{-j-1} < |x| < 2^{-j} \right\},\,j \in \mathbb{N}_0
  \end{equation*}
  and let $\widehat{\Gamma}=\Gamma_0$. For all $j\in \mathbb{N}$, let $\chi_j$
  be the homothety from $\widehat{\Gamma}$ to $\Gamma_j$ and denote with a hat
  the quantities rescaled on $\widehat{\Gamma}$, e.g., $\hat{v} = v \circ \chi$. Then,  by a
   scaling argument and since $1/2 < \hat{r}_{|_{\widehat{\Gamma}}}< 1$, we have 
  \begin{equation*}
    \max_{\alpham \leq \ell}\|r^{\alpham - \gamma +d/p}\dalpha v \|_{L^\infty(\Gamma_j)}  \leq 2^{j(\gamma-d/p)} \max_{\alpham \leq \ell}\| \hat{r}^\alpham \widehat{\partial}^\alpha\hat{v}\|_{L^{\infty}(\widehat{\Gamma})}.
  \end{equation*}
  By the embedding of $W^{2,p}(\widehat{\Gamma})$ in $L^\infty(\widehat{\Gamma})$,
  then, there exists $C>0$ independent of $\ell$ and $j$ such that
  \begin{equation*}
    \max_{\alpham \leq \ell}\|r^{\alpham - \gamma +d/p}\dalpha v \|_{L^\infty(\Gamma_j)}  
    \leq C 2^{j(\gamma-d/p)} \max_{\alpham\leq \ell}\| \hat{r}^\alpham\widehat{\partial}^\alpha\hat{v}\|_{W^{2,p}(\widehat{\Gamma})}.
  \end{equation*}
  Hence, by a simple differentiation, injecting the necessary weight, using
  again that $1/2 < \hat{r}_{|_{\widehat{\Gamma}}}< 1$, and bounding the maximum
  over $\alpham \leq \ell$ with the respective sum, we arrive at
  \begin{equation*}
    \max_{\alpham \leq \ell}\|r^{\alpham - \gamma +d/p}\dalpha v \|_{L^\infty(\Gamma_j)}  
    \leq 
    C 2^{j(\gamma-d/p)} (\ell+1)^2 
    \left(  \sum_{\alpham\leq \ell+2}\| \hat{r}^{\alpham-\gamma}
              \widehat{\partial}^\alpha\hat{v}\|_{L^{p}(\widehat{\Gamma})}^p\right)^{1/p}.
  \end{equation*}
  Scaling back to the original domain, we obtain the existence of $C>0$
  independent of $\ell$ and $j$ such that
  \begin{equation*}
    \max_{\alpham \leq \ell}\|r^{\alpham - \gamma +d/p}\dalpha v \|_{L^\infty(\Gamma_j)}  
    \leq C (\ell+1)^2 \| v\|_{\cK^{\ell+2, p}_\gamma(\Gamma_j)},
  \end{equation*}
  hence there exists $C>0$ such that for all $\ell\in \mathbb{N}$ holds
  \begin{equation*}
    \| v \|_{\mathcal{K}^{\ell,\infty}_{\gamma-d/p} (B_1)}= \sup_{j\in \mathbb{N}_0} \|v\|_{\mathcal{K}^{\ell,\infty}_{\gamma-1} (\Gamma_j)} \leq C  (\ell+1)^2 \| u \|_{\mathcal{K}^{\ell+2, p}_\gamma (B_1)}.
  \end{equation*}
  \end{proof}
  \bibliographystyle{amsalpha-abbrv}
  \bibliography{library}
\end{document}